\def\bb#1{\mathbb{#1}}
\def\lr#1{\left\langle #1\right\rangle}
\newtheorem{mainthm}{Theorem}
 \newtheorem{thm}{Theorem}[section]
 \newtheorem{lem}[thm]{Lemma}
 \newtheorem{prop}[thm]{Proposition}
 \theoremstyle{definition}
 \newtheorem{defn}[thm]{Definition}
 \theoremstyle{remark}
 \newtheorem{rem}[thm]{Remark}
 \newtheorem*{ex}{Example}
 \numberwithin{equation}{section}
 \DeclareMathOperator{\Hom}{Hom}
\begin{document}

\title[Curvature of Pull-back-bundles]
 {Flat sections and non-negative curvature in pullback bundles}

\author[Dur\'an]{C. Dur\'an}

\address{C. Dur\'an\\Departamento de Matem\'atica, UFPR \\
 Setor de Ci\^encias Exatas, Centro Polit\'ecnico, \\
 Caixa Postal 019081,  CEP 81531-990, \\
 Curitiba, PR, Brazil}

\email{cduran@ufpr.br}

\author[Speran\c{c}a]{L. D. Speran\c ca}

\address{L. D. Speran\c{c}a\\IMECC - Unicamp,
Rua S\'ergio Buarque de Holanda\\
13083-859\\
Cidade Universit\'aria, Campinas \\
Brazil}

\email{llohann@ime.unicamp.br}

\thanks{The second author was financially supported by FAPESP, grant numbers 2009/07953-8 and 2012/25409-6.}

\subjclass{Primary  53C20}

\keywords{Non-negative curvature, principal bundles}


\begin{abstract}We give 
a geometric obstruction to the non-negativity of the sectional curvature in the total spaces of certain Riemannian 
submersions with totally geodesic fibers; applications of this obstruction to several examples are given.

  \end{abstract}

\maketitle
\section{Introduction}

 The study of manifolds which admit a metric of positive or non-negative sectional
curvature 
has lead to certain standard constructions and conditions that guarantee 
that these constructions furnish a metric with the desired properties. The classical 
example is the following: let $M$ be a manifold of positive sectional curvature, $G$ a compact Lie group
and $G\dots P\to M$
be a principal bundle-with-connection  over $M$. 
One can then endow $P$ with a ``Kaluza-Klein"-type metric, making the vertical and horizontal space orthogonal by definition, and inducing 
the metric on the horizontal space by the metric of $M$ and on the vertical space by some canonical metric on $G$ (typically biinvariant).
This procedure makes the bundle a Riemannian submersion 
with totally geodesic fibers.  O'Neill theory then says that the geometry of the total 
space if mostly controlled by the O'Neill tensor $A$ and the metric on the base (\cite{gromoll-walschap,oneill}). 

An immediate necessary condition for the positivity of curvature is that for $X$ vertical and $U$ horizontal, the 
O'Neill tensor $A^\ast_XU$ cannot be zero, since the vertizontal (unnormalized) sectional curvatures are given by 
$|A^\ast_XU|^2$ in this case. This condition, called {\em fatness} \cite{weinstein}, depends only on the connection on the bundle, and imposes upon it strong  
topological restrictions
\cite{derdzinski-rigas, florit-ziller}. Assuming fatness, 
in \cite{Chaves-Derdzinski-Rigas}
conditions are given for the total spaces of these principal bundles to admit such metrics 
of positive 
curvature.  These conditions take the form of differential inequalities relating the curvatures 
of $M$ and the bundle connection.

In this paper we are interested in {\em non-negative} curvature. 
The case of non-negative curvature on compact Riemannian manifolds still has many unanswered questions: 
for example, do 
all 7-dimensional exotic spheres or any sphere of other dimension admit a metric of non-negative curvature? (it is known to be true 
for exotic 7-spheres which can be realized as sphere bundles over $S^4$, \cite{grove-ziller}, 
and to be false for spheres which do not bound \emph{spin} manifolds, see \cite{licherowicz}, for example).
For non-negative curvature, the fatness condition on the O'Neill tensor can in principle be relaxed, but we shall see that 
 there  is a lot of rigidity along the degenerate directions.


\begin{mainthm}\label{main2}
Let $\xi:E{\to} M$ be a principal Riemannian submersion with totally geodesic fibers. Then, the set of vectors
\begin{gather}
D_b=\{X\in T_mM~|~A_{\hat X}=0, \,\, \hat X \,=\,\mathrm{horizontal \,\,lift\,\, of}\,\, X\}
\end{gather}
defines a smooth involution $D$ in an open and dense subset of $M$. Furthermore, a necessary condition for 
the total space $E$ to have non-negative curvature is that the integral manifolds of $D$ are totally geodesic.
\end{mainthm}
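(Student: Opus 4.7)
The plan is to identify $D$ with the kernel of the connection's curvature form, prove involutivity by the Bianchi identity, and deduce the total-geodesicity of the leaves by polarizing O'Neill's mixed sectional-curvature formula in the presence of non-negative curvature.

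First, write $\Omega$ for the curvature 2-form of the principal connection, viewed as an $\operatorname{ad} E$-valued 2-form on $M$. The O'Neill identity $2A_{\hat X}\hat Y = \mathcal V[\hat X,\hat Y]$, together with the canonical isomorphism of the vertical bundle with $\operatorname{ad} E$, gives $A_{\hat X} = 0 \iff i_X\Omega = 0$, so $D_b = \ker(X\mapsto i_X\Omega)$ is the kernel of a smooth bundle morphism $TM \to \operatorname{Hom}(TM, \operatorname{ad} E)$. By lower semicontinuity of rank, $D$ is a smooth subbundle of $TM$ on the open dense subset where this kernel has locally constant dimension. For involutivity I would apply the Bianchi identity $d^\nabla \Omega = 0$ in cyclic form $\sum_{\mathrm{cyc}}(\nabla_X\Omega)(Y,Z) = 0$ (with the Levi-Civita connection of $M$). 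For $X,Y\in D$ and arbitrary $Z$, expanding $(\nabla_W\Omega)(U,V) = \nabla_W(\Omega(U,V)) - \Omega(\nabla_W U,V) - \Omega(U,\nabla_W V)$ and using $\Omega(D,\cdot)\equiv 0$ collapses the sum to $\Omega(\nabla_X Y - \nabla_Y X, Z) = \Omega([X,Y],Z) = 0$ for every $Z$, giving $[X,Y]\in D$.

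For the total-geodesicity assertion I would first lift the leaf. Let $L$ be an integral manifold of $D$. Since $\mathcal V[\hat X,\hat Y] = 2A_{\hat X}\hat Y = 0$ on $\hat D$, the horizontal distribution restricted to $\pi^{-1}(L)$ is tangent to $\pi^{-1}(L)$ and involutive, integrating to a foliation of $\pi^{-1}(L)$ by horizontal lifts $\hat L$ of $L$, each mapped isometrically onto $L$ by $\pi$. By O'Neill, $\nabla^E_{\hat X}\hat Y = \widehat{\nabla^M_X Y} + A_{\hat X}\hat Y$ for horizontal $\hat X,\hat Y$; the second term vanishes along $\hat D$, so the second fundamental form of $\hat L \subset E$ is the horizontal lift of the second fundamental form of $L \subset M$. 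Hence it is equivalent to show that $\hat L$ is totally geodesic in $E$.

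For this last reduction the non-negative curvature hypothesis enters as follows. At $p\in \hat L$ and a unit vertical $V \in T_p E$, the bilinear form $b_V(X,Y) := R^E(X,V,Y,V)$ on the horizontal space $\mathcal H_p$ is positive semidefinite (as $K^E \geq 0$), and on $\hat D_p$ it satisfies $b_V(\hat X,\hat X) = \lVert A_{\hat X} V\rVert^2 = 0$. Since a positive semidefinite bilinear form vanishes on the radical of its zero set, $b_V(\hat X,\hat Y) = 0$ for every horizontal $\hat Y$. Expanding this vanishing via O'Neill's formula for the mixed curvature $R^E(X,V,Y,V)$, the $A$-quadratic part is already zero because $A_{\hat X} = 0$ at $p$, so only the covariant-derivative-of-$A$ term survives; identifying that term, in the $\Omega$ picture, with $\Omega(\nabla^M_X Y, \cdot)$ yields $\nabla^M_X Y \in D$ whenever $X,Y \in D$, i.e., $L$ is totally geodesic. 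The main obstacle I expect is precisely this final identification: Bianchi alone only produces the antisymmetric combination $\Omega([X,Y],\cdot) = 0$, and the non-negative curvature hypothesis is exactly what promotes this to the symmetric statement $\Omega(\nabla^M_X Y,\cdot) = 0$ required for total-geodesicity, so the delicate step is matching the precise form of O'Neill's mixed-curvature identity with this symmetrization.
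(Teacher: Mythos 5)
Your involutivity argument is correct and takes a genuinely different route from the paper's: you work downstairs with the curvature $2$-form $\Omega$ and the (covariant exterior) Bianchi identity, whereas the paper argues upstairs via the Jacobi identity for horizontal lifts together with $[\hat X,\hat Z]=\widehat{[X,Z]}$ when $A_{\hat X}=0$. Both are fine; yours is arguably cleaner for the ``pullback'' sections later.

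The total-geodesicity step, however, has a genuine gap, and it is precisely at the place you flag as ``delicate.'' You choose to polarize the $(2\text{-horizontal},2\text{-vertical})$ curvature $b_V(\hat X,\hat Y)=R^E(\hat X,V,\hat Y,V)$. With totally geodesic fibers the O'Neill identity for this block is
\[
R^E(\hat X,V,\hat Y,W)=\lr{(\nabla_V A)_{\hat X}\hat Y,W}+\lr{A_{\hat X}V,A_{\hat Y}W},
\]
i.e.\ the derivative term is a \emph{vertical} derivative of $A$. But when $\hat X$ is basic with $A_{\hat X}=0$, the field $A_{\hat X}\hat Y$ vanishes along the whole fiber ($A_{\hat X}$ is $G$-invariant), and $\nabla_V\hat X=T_V\hat X+A_{\hat X}V=0$, so $(\nabla_V A)_{\hat X}\hat Y=0$ identically. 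Hence $b_V(\hat X,\cdot)\equiv 0$ for trivial, tautological reasons, with or without the non-negativity hypothesis; the quadratic-trick conclusion adds nothing, and there is no $\Omega(\nabla^M_XY,\cdot)$ term lurking in this block to extract. The paper instead pairs $R^E(\hat X,U)\hat X$ against a \emph{horizontal} vector $\hat Z$, i.e.\ uses the $(3\text{-horizontal},1\text{-vertical})$ O'Neill identity
\[
g_P\bigl(R^E(\hat X,U)\hat X,\hat Z\bigr)
=-\lr{\nabla_{\hat X}(A_{\hat X}\hat Z),U}+\lr{A_{\nabla_{\hat X}\hat X}\hat Z,U}+\lr{A_{\hat X}\nabla_{\hat X}\hat Z,U},
\]
whose derivative is taken in the \emph{horizontal} direction $\hat X$. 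After killing the $A_{\hat X}$-terms this collapses to $\lr{A_{\nabla_{\hat X}\hat X}\hat Z,U}=0$ for all $\hat Z,U$, hence $A_{\widehat{\nabla^M_XX}}=0$, i.e.\ $\nabla^M_X X\in D$, which polarizes to total geodesicity. To repair your argument, replace $b_V$ by the pairing against horizontal $\hat Z$ and use the $(3,1)$ identity; the quadratic-trick lemma ($R(\hat X,U)\hat X=0$, all components) is what makes this legitimate, and the non-negativity hypothesis enters exactly there, not in a PSD statement about the $(2,2)$ block.
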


 In the generic case, theorem \ref{main2} will give the trivial foliation of $B$ by points. Therefore this theorem 
should not be seen as a ``structure theorem for manifolds of non-negative curvature''; instead, its value is in the 
construction of new examples of manifolds with non-negative curvature (which always use highly non-generic set-ups), 
as a quick way to check if there is any hope in a given metric. Section \ref{examples} will clearly illustrate 
this point. 
Of particular interest are {\em pullback} bundles;
abstractly, because all principal bundles are topologically pullbacks of the universal bundles, and also 
concretely, we have found that  many of the relevant examples in the literature of non-negative curvature are 
actually 
modelled as pullbacks,
and the natural induced connection and $A$-tensor will be degenerate along the fibers of the 
pullback map. 
The Gromoll-Meyer construction of an exotic sphere as a quotient of $Sp(2)$ falls within the 
scope of this result, since $Sp(2)$ with its canonical biinvariant metric is the pullback of 
the Hopf bundle (example \ref{sp(2)-as-pullback}). It has been recently realized that by 
studying other pullback maps many further geometric models of exotic spheres can be constructed 
(see \cite{DPR,lohan-8} and section \ref{examples}).

We can specialize theorem \ref{main2} to the case of a pullback connection:

 \begin{defn} \label{chubby}
 A Riemannian submersion is {\em non-degenerate} 
 if the map $X \mapsto A_X$ is a one-to-one mapping from the horizontal space 
into $\Hom(\text{Horizontal},\text{Vertical})$. 
 \end{defn}

Like fatness, non-degeneracy only depends on the connection form of the bundle. Non-degeneracy is a much weaker 
condition; 
fatness means  that for each 
horizontal $X\neq 0$ the map $A_X:\text{Horizontal}\to\text{Vertical}$ is onto and, in particular, non-zero.

 \begin{mainthm} \label{main-kk}
Let $\xi:G\cdots P \to B$ be a non-degenerate principal Riemannian submersion with totally geodesic fibers, 
$M$ a Riemannian manifold, and $f:M\to B$ a differentiable function. Endow the total space $E$ of the pullback bundle 
$f^\ast\xi$ with the 
Kaluza-Klein metric relative to the pullback connection. A necessary condition for
$E$ to have non-negative curvature
is that for each regular value $b \in B$, $f^{-1}(b)$ is totally geodesic in $M$.
\end{mainthm}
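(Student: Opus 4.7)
The plan is to apply Theorem~\ref{main2} to the pullback bundle $f^*\xi$ and, on the open set of regular points of $f$, identify the distribution $D$ with $\ker df$. Once this identification is established the conclusion is immediate: for a regular value $b$, every point of $f^{-1}(b)$ is a regular point of $f$, so the connected components of $f^{-1}(b)$ are integral manifolds of $\ker df = D$, which Theorem~\ref{main2} forces to be totally geodesic in $M$.

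The technical heart of the argument is the computation of the A-tensor of the pullback bundle. Realising $E = \{(m,p) \in M \times P \mid f(m)=\xi(p)\}$ with its natural bundle map $\tilde f : E \to P$, the horizontal space of the pullback connection at $(m,p)$ consists of those $(v,w)$ with $w$ horizontal in $P$ and $d\xi(w) = df(v)$. Hence the horizontal lift $\hat X$ of $X \in T_m M$ projects under $\tilde f$ to the horizontal lift in $P$ of $df(X)$. For principal Kaluza--Klein submersions the A-tensor is, up to sign and a factor of $\tfrac12$, the vertical part of the Lie bracket of horizontal vector fields and is therefore determined by the curvature form of the connection; since the curvature of the pullback connection is the pullback of that of $\xi$, this yields, after the natural identification of the vertical spaces of $E$ and $P$,
\begin{equation}
A^E_{\hat X}\hat Y \;=\; A^P_{\widehat{df(X)}}\,\widehat{df(Y)}.
\end{equation}

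With this identity in hand, $X \in D_m$ means that $A^E_{\hat X}$ vanishes on every horizontal vector of $E$, equivalently that $A^P_{\widehat{df(X)}}$ vanishes on the image of $df_m$ in $T_{f(m)} B$. At a regular point of $f$ this image is all of $T_{f(m)} B$, so non-degeneracy of $\xi$ forces $df(X)=0$, i.e.\ $X \in \ker df_m$; the reverse inclusion is trivial. Thus $D = \ker df$ on the open set of regular points of $f$, a set which contains $f^{-1}(b)$ whenever $b$ is a regular value, and Theorem~\ref{main2} then delivers the desired conclusion.

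The main obstacle I anticipate is bookkeeping rather than substance: carefully tracking horizontal lifts, curvature forms and A-tensors under the bundle map $\tilde f$, and verifying that the induced identification of vertical spaces of $E$ and $P$ is compatible with the chosen Kaluza--Klein metrics. Once the pullback formula for $A$ is in place, the theorem reduces to a direct application of non-degeneracy together with Theorem~\ref{main2}.
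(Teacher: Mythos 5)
Your proposal is correct and follows essentially the same route as the paper: both characterize the $A$-flatness of the pullback via the curvature form identity (so that the $A$-flat condition becomes $\Omega(df_m X, df_m Z)=0$ for all $Z$), use that $f$ is a submersion near a regular value together with non-degeneracy of $\xi$ to identify the $A$-flat distribution $D$ with $\ker df$, and then invoke the totally-geodesic conclusion of Theorem~\ref{main2}. The explicit formula $A^E_{\hat X}\hat Y = A^P_{\widehat{df X}}\widehat{df Y}$ you write down is just a repackaging of the paper's $\Omega$-based description and does not represent a substantive deviation.
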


When the bundle $\xi$ is some canonical construction such as Hopf bundles,
the easiest way to represent a particular Kaluza-Klein metric is to consider the total space of 
$f^\ast\xi$ as a Riemannian submanifold of $M\times P$. This imposes a change on the metric on $M$, but we will see
(lemma \ref{tanto-faz}) that this is inconsequential.

\begin{mainthm} \label{main}
Let $\xi:G\cdots P \to B$ be a non-degenerate principal Riemannian submersion with totally geodesic fibers, $M$ a 
Riemannian manifold, and $f:M\to B$ a differentiable function. A necessary 
condition for the total space of the pullback $f^*\xi$ to have non-negative curvature with the pullback metric 
is that for each regular value $b \in B$, $f^{-1}(b)$ is totally geodesic in $M$.
\end{mainthm}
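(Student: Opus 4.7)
The plan is to reduce Theorem \ref{main} to Theorem \ref{main-kk} by recognizing that the submanifold metric on $f^*\xi \subset M\times P$ is a Kaluza-Klein metric for the pullback connection, but with respect to a modified base metric on $M$; then to invoke Lemma \ref{tanto-faz} to transfer the totally geodesic property of $f^{-1}(b)$ between the two metrics on $M$.

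First I would unpack the tangent geometry of the pullback $E=\{(m,p)\in M\times P:f(m)=\xi(p)\}$ with the induced submanifold metric. At a point $(m,p)\in E$, the tangent space consists of pairs $(v,w)\in T_mM\times T_pP$ with $df_m(v)=d\xi_p(w)$. A vector is vertical for the principal $G$-action precisely when $v=0$ and $w$ is $\xi$-vertical; a vector is horizontal (orthogonal to the vertical in the product metric) iff $w$ is $\xi$-horizontal, hence equals the horizontal lift $\widehat{df(v)}$ of $df(v)$, which is exactly the pullback connection. Thus the pullback connection is the Kaluza-Klein horizontal distribution for the induced metric; no choice needs to be made here.

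Next I would compute the induced norm on a horizontal vector: since horizontal lifts in $P\to B$ are isometric to $B$, one has $|(v,\widehat{df(v)})|^2 = g_M(v,v)+g_B(df(v),df(v))$. In other words, the submanifold Kaluza-Klein metric on $E$ coincides with the Kaluza-Klein metric for the pullback connection built from the modified base metric $\tilde g:=g_M+f^*g_B$ on $M$, together with the same fixed biinvariant metric on $G$. Therefore, if $E$ has non-negative curvature with the product-submanifold metric, Theorem \ref{main-kk} applied to $(M,\tilde g)\xrightarrow{f}B$ forces every fiber $f^{-1}(b)$ over a regular value to be totally geodesic in $(M,\tilde g)$.

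Finally, I would invoke Lemma \ref{tanto-faz} to conclude the same for the original metric $g_M$. The point is that on $N=f^{-1}(b)$ one has $df\equiv 0$ along $TN$, so $\tilde g$ and $g_M$ agree on $TN$ and the $\tilde g$- and $g_M$-orthogonal complements of $TN$ coincide; a short Koszul-formula computation shows that the two second fundamental forms $\mathrm{II}^{g_M}_N$ and $\mathrm{II}^{\tilde g}_N$ agree, so being totally geodesic is the same condition for both metrics. The main obstacle is precisely this last step: one must check carefully that the extra term $f^*g_B$, although it distorts $g_M$ off of $N$, contributes nothing to the second fundamental form of $N$, because its derivatives in directions tangent to $N$ are controlled by $df|_{TN}=0$. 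Granting Lemma \ref{tanto-faz}, the theorem follows.
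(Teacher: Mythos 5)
Your proposal is correct and follows essentially the same route as the paper: the paper's ``graph metric'' $g_{\Gamma_f}$ is precisely your $\tilde g = g_M + f^*g_B$, and its proof of Theorem~\ref{main} likewise combines the observation that the pullback metric is the Kaluza--Klein metric for the pullback connection over $(M,g_{\Gamma_f})$ with Theorem~\ref{main-kk} and Lemma~\ref{tanto-faz}. One small caveat: the two second fundamental forms do not literally coincide (since $\tilde g$ and $g_M$ differ on the normal bundle), but they are related by a bundle isomorphism, so one vanishes iff the other does --- which is all Lemma~\ref{tanto-faz} claims and all you need.
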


The condition of each regular fiber of a map $f:M \to B$ being totally geodesic places strong restrictions on 
the metric of $M$; typically, there will be no metric on $M$ for which the regular leaves are totally geodesic. 
A key element of this analysis is the study of how the map $f$ behaves near the singular leaves. We offer 
here a simple stability lemma that is enough to deal with the examples; a finer result will be given in 
\cite{stability-paper}. 

\begin{mainthm}\label{stability} Let $f:M\to B$ be a map such that the regular fibers are totally geodesic. 
If $S$ is a submanifold contained in  a (possibly singular) fiber of $f$  then  fibers  contained 
in a small enough neighbourhood of $S$ immerse into $S$.
\end{mainthm}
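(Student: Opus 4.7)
The plan is to construct the immersion explicitly as the restriction of a tubular-neighborhood projection, and to verify it via a limiting argument on tangent spaces that uses the total geodesy of the regular fibers.

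First, I choose a tubular neighborhood $U$ of $S$ in $M$, small enough so that $f^{-1}(b_0)\cap\overline{U}$ coincides with $S$ locally near each point $p\in S$, where $b_0=f(S)$. Let $\pi:U\to S$ denote the associated smooth retraction, whose fibers $\pi^{-1}(s)$ are the normal geodesic disks at $s\in S$; in particular, $\ker d\pi_q$ is the tangent space to the normal disk through $q$. The immersion claim for any regular fiber $F\subseteq U$ reduces to showing that $T_qF\cap\ker d\pi_q=\{0\}$ for every $q\in F$.

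The key technical step is the following claim: if $q_n\to p\in S$ with $q_n\in F_n$ for regular fibers $F_n\subseteq U$, and if $v_n\in T_{q_n}F_n$ with $v_n\to v\in T_pM$ (identified via parallel transport), then $v\in T_pS$. To prove this, total geodesy of $F_n$ gives $\gamma_n(t):=\exp_{q_n}(tv_n)\in F_n\subseteq U$ for all $t$ in its domain of definition. Passing to the limit on compact intervals, $\gamma(t):=\exp_p(tv)\in\overline{U}$, and continuity of $f$ yields $f(\gamma(t))=\lim_n f(q_n)=b_0$. Therefore $\gamma(t)\in f^{-1}(b_0)\cap\overline{U}$, which by construction of $U$ equals $S$ near $p$; hence $v=\gamma'(0)\in T_pS$.

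With the claim in hand, for $q\in F$ close enough to $S$, the subspace $T_qF$ (parallel-transported along the normal geodesic to $\pi(q)$) lies close to a subspace of $T_{\pi(q)}S$. Since $\ker d\pi_q$ corresponds under the same identification to $N_{\pi(q)}S$, which intersects $T_{\pi(q)}S$ trivially, $T_qF$ is transverse to $\ker d\pi_q$. Thus $d\pi_q|_{T_qF}$ is injective, and $\pi|_F:F\to S$ is the desired immersion.

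The main obstacle is the control exercised in the key claim: one needs the limiting geodesic $\gamma$ to stay in $S$ and not merely in the potentially larger singular fiber $f^{-1}(b_0)$. This is secured by shrinking $U$ so that other local components of $f^{-1}(b_0)$ are separated from $S$; the hypothesis $F_n\subseteq U$ then forces $\gamma_n$, and consequently its limit, to remain in $U$, where $S$ is the only accessible piece of the singular fiber.
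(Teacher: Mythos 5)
Your proposal hinges on a step that the hypotheses do not grant: in the second paragraph you assume the tubular neighborhood $U$ can be shrunk so that $f^{-1}(b_0)\cap\overline U$ coincides with $S$ locally near each $p\in S$, and the final paragraph acknowledges this is what "secures" the key claim. But $S$ is only assumed to be \emph{some} submanifold contained in the (possibly singular) fiber $f^{-1}(b_0)$ -- there is no hypothesis that $S$ is open in $f^{-1}(b_0)$, or isolated from the rest of it. The singular fiber may have other strata, branches or lower-dimensional pieces whose closure meets (or accumulates on) $S$, in which case no tubular neighborhood of $S$ separates $S$ from the rest of $f^{-1}(b_0)$, and your limiting geodesic $\gamma$ only lands in $f^{-1}(b_0)\cap\overline U$, not in $S$. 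At that point you cannot conclude $v\in T_pS$, which is what the rest of the argument depends on. (Indeed, in the paper's own applications the singular fiber is exactly this kind of stratified set with several components meeting $S$.)

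The paper's argument is structured precisely to avoid ever reasoning about $f^{-1}(b_0)$ near $S$: it isolates a purely metric statement (Proposition~\ref{pre-stability}), that any \emph{complete totally geodesic} submanifold $L$ contained in a small enough tubular neighborhood $U_\delta$ of $S$ projects immersively onto $S$. This is proved quantitatively (Lemma~\ref{lem stab}) by showing that $\eta=\tfrac12\,d(\cdot,S)^2$ is strictly convex, with $h''>2/5$, along unit geodesics tangent to the normal fibers $\exp_x(\nu_\delta)$; such a geodesic is therefore forced out of $U_\delta$, so $L$ can carry no tangent vector in $\ker Dp$. The regular fibers of $f$ then enter only because they \emph{are} complete totally geodesic submanifolds; no control of the singular fiber itself is required. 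If you want to repair your argument in this spirit, replace the ``isolate $S$ inside $f^{-1}(b_0)$'' step with a direct argument (convexity or a Jacobi-field estimate) that a complete totally geodesic submanifold trapped in $U_\delta$ cannot be tangent to the normal disks of $S$.
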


Therefore, if one suspects that the total space of a pullback bundle admits non-negative curvature, the 
first test should be to look at the behavior of the fibers of the pullback map on the base spaces, near singular points. 
We shall see in section \ref{examples} how this test discards some examples (in which a direct computation 
of the curvature would be rather involved), and give hope for others.

\bigskip

\noindent{\bf Notation}: Given a Riemannian metric, $\nabla$ denotes its Levi-Civita connection, 
$R$  the curvature tensor 
$R(X,Y)Z = \nabla_X\nabla_Y Z - \nabla _Y \nabla_X Z - \nabla_{[X,Y]} Z$, and $K$ 
the unnormalized sectional curvature 
associated to $R$. When necessary, left 
superscripts will be added to identify different spaces, e.g. $^M \nabla$.

In a Riemannian submersion, $P \stackrel{\pi}{\to} B$,  given $Y_x \in T_bB$ we denote its horizontal 
lift at $p\in \pi^ {-1}(y) \subset P$ by a hat, $\hat Y_p \in T_pP$. 
 The O'Neill tensor will be denoted by the letter $A$. Anywhere we give a property of a vector 
 $X \in T_bB$ in terms of a horizontal lift $\hat{X}$ in $T_{p}P$, it will be clear that 
the property is independent of the point $p \in \pi^{-1}(b)$.

 \section{Geometry of pullback bundles} \label{geometry-of-pullbacks}

Consider a principal bundle $G \cdots P \stackrel{\pi}{\to} B$. Let $M$ be a Riemannian manifold, $f: M \to B$ a differentiable map and consider 
 the pullback $G\cdots f^*P \to M$. 
 
 \begin{defn} The {\em pullback metric} on $f^*P$ is the metric induced 
 as a submanifold by the definition of a pullback, i.e., 
$f^*P=\{(m,p)\in M\times P \,\,|\,\, f(m) = \pi(p)\}$, where we endow $M\times P$ with the product metric. 
\end{defn}

\begin{ex} \label{sp(2)-as-pullback}

The Lie group $Sp(2)$ can be realized as the total space of the  pullback of the Hopf bundle by  minus the Hopf map: 

\[
\begin{CD}
@. @. S^3 @. S^3\\
@. @. \vdots @. \vdots\\
S^3 @.\cdots @. {Sp(2)} @> s >> {S^7} \\
@. @.  @ V{f} VV @ VV {h} V \\
S^3 @.\cdots @. {S^7} @>> {a\circ h} > {S^4} \\
\end{CD} 
\]

In the diagram, $h$ denotes the Hopf map corresponding to the right Hopf action of $S^3$ on $S^7$, $a$ the antipodal map of $S^4$, and $f$ and $s$ denote the first 
and second columns of a matrix in $Sp(2)$. 
When both $S^7$'s in the diagram have the same constant curvature, then the pullback metric of 
$Sp(2)$ is biinvariant (see \cite{rigas-BSMF} and proposition 2.1 in \cite{wilhelm-lots}).

\end{ex}

 Some more sophisticated examples will be considered in section \ref{examples}.

Let us establish some geometrical properties of the pullback bundle. First, the submersion 
$G \cdots f^*P \to M$, where $f^*P$ has the pullback metric and $M$ 
has its given metric, is {\em not} a Riemannian submersion. It is easy to see that in order 
to make it a Riemannian submersion, we must endow $M$ with the {\em graph metric} induced 
from the embedding of $M$ as a subset
$\Gamma_f = \{(m,b) \in M\times B \,\, |\,\,f(m) = b\} \subset  M \times B $, 
the later endowed with the product metric. Indeed,

\begin{prop}
Let $\pi:P\to B$ be a principal bundle with the Kaluza Klein metric defined by 
a connection 1-form $\omega$ and a metric $g_B$ on $B$, $M$ a Riemannian manifold, 
and $f:M \to B$  a smooth map. Then, 
the pullback metric on $f^*P$ is the Kaluza-Klein metric defined by the 
pullback connection $f^*\omega$ and the graph metric $g_{\Gamma_f}$.\end{prop}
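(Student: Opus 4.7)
The plan is to verify directly that, at each $(m,p) \in f^*P$, the three defining features of a Kaluza-Klein metric (orthogonality of horizontal and vertical, the canonical vertical metric, and the horizontal space being isometric to the base) hold for the submanifold metric $g_M \oplus g_P$ restricted to $f^*P$, when one takes horizontal with respect to $f^*\omega$ and the base metric to be $g_{\Gamma_f}$.

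First I would unpack the relevant distributions inside $T(M\times P)$. A tangent vector to $f^*P \subset M\times P$ is a pair $(v,w) \in T_mM \oplus T_pP$ satisfying $f_*v = \pi_*w$. The vertical distribution for $f^*P \to M$ is $\{0\}\oplus V_p$, with $V_p$ the vertical space of $P\to B$. The pullback connection is $(f^*\omega)(v,w) = \omega(w)$, so the pullback-horizontal space at $(m,p)$ is $\{(v,\hat{f_*v}_p) \,:\, v \in T_mM\}$, with $\hat{\;}$ denoting the horizontal lift in $P$.

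Next I would check the three properties in turn. Orthogonality of horizontal and vertical in the product metric reduces to $\langle V, \hat{f_*v}_p\rangle_P = 0$, which holds because the Kaluza-Klein metric on $P$ has $V_p \perp H_p$. For the vertical part, $|(0,V)|^2_{\mathrm{prod}} = |V|^2_P$, which is precisely the $G$-canonical norm of $\omega(V)$ by construction of the Kaluza-Klein metric on $P$. For the horizontal part,
\begin{equation*}
|(v,\hat{f_*v}_p)|^2_{\mathrm{prod}} \,=\, |v|^2_M + |\hat{f_*v}_p|^2_P \,=\, |v|^2_M + |f_*v|^2_B,
\end{equation*}
since horizontal lifting is an isometry in the Kaluza-Klein metric on $P$. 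On the other hand, the graph metric $g_{\Gamma_f}$ is by definition the pullback of $g_M \oplus g_B$ under $m \mapsto (m,f(m))$, so $|v|^2_{g_{\Gamma_f}} = |v|^2_M + |f_*v|^2_B$. The horizontal metric therefore matches $g_{\Gamma_f}$, and the projection $(v,\hat{f_*v}_p) \mapsto v$ becomes an isometry onto $(T_mM, g_{\Gamma_f})$.

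Combining these three points, the restriction of $g_M \oplus g_P$ to $f^*P$ splits orthogonally as $(\text{canonical vertical}) \oplus (\text{horizontal isometric to base})$ with the base being $(M,g_{\Gamma_f})$ and the horizontal distribution being that of $f^*\omega$ — which is exactly the characterization of the Kaluza-Klein metric built from these data. There is no substantive obstacle; the only point to be careful about is not to confuse the original metric $g_M$ with the graph metric $g_{\Gamma_f}$, since it is precisely the $|f_*v|^2_B$ term arising from the horizontal lift in $P$ that forces the base metric to be $g_{\Gamma_f}$ rather than $g_M$.
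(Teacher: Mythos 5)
Your proof is correct and takes essentially the same approach as the paper's: both unpack the submanifold metric using the Kaluza--Klein decomposition of $g_P$ together with the tangency constraint $f_*v = \pi_*w$, which forces the horizontal contribution to collapse onto the graph metric $g_{\Gamma_f}$. The paper phrases this as a single computation of $g_{f^*P}(W,W)$ for a generic tangent vector $W=(X,Y)$, while you verify the three defining features of a Kaluza--Klein metric (orthogonality, canonical vertical norm, horizontal isometric to the base) separately, but the underlying calculation is identical.
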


\begin{proof}

Let $W=(X,Y)$ be a vector tangent to $f^*P$. 
So, by the definition of $f^*P$, $df(X)=d\pi(Y)$. Spelling out the induced metric on $f^*P$, we have
\begin{align*}
g_{f^*P}(W,W)&=g_{M\times P}((X,Y),(X,Y))=g_M(X,X)+g_P(Y,Y)\\
&=g_M(X,X)+g_B(d\pi Y,d\pi Y)+\beta(\omega(Y),\omega(Y))\\
&=g_M(X,X)+g_B(dfX,dfX)+\beta(\omega(Y),\omega(Y))\\
&=g_{\Gamma_f}(X,X)+\beta(\omega(Y),\omega(Y))
\end{align*}

Now the proof follows by observing that $Y$ is the image of $W$ by the 
derivative of the induced pullback map. In particular, $\omega(Y)$ is the image of 
$W$ by the pullback connection, as desired.

\end{proof}

\begin{rem}
The fact that $M$ has to change its original metric in order 
for $f^* P \to M$ be a Riemannian submersion is already present
in the example from the introduction: with a biinvariant metric the submersion 
$Sp(2) \to S^7$ subduces a metric on $S^7$ that is not the 
round one, a fact used in \cite{duran}.
\end{rem}

The vertical space is given by vectors of the form $(0,U)$, $U\in TP$ vertical. 
Horizontal vectors have the form $(Z,\widehat{df(Z)}), Z \in TM$. In particular, if $Z$ is tangent to a level set of $f$, 
then $(Z,0)$ is horizontal.

The following property will be essential in theorem \ref{main}:

\begin{lem} \label{tanto-faz}
 Let $M,B$ be Riemannian manifolds,  $f: M \to B$, and $b\in B$ be a regular value of $f$. Then $f^{-1}(b)$ is totally geodesic 
in $M$ if and only if it is totally geodesic in the graph $\Gamma_f$. 
\end{lem}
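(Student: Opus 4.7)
The plan is to reduce both directions of the equivalence to a single statement comparing geodesics. Since $b$ is a regular value, $f^{-1}(b)$ is a genuine submanifold of $M$, and I can use the identification of $(M,g_{\Gamma_f})$ with $\Gamma_f\subset M\times B$ given by the graph embedding $\iota(m)=(m,f(m))$, under which $f^{-1}(b)$ corresponds to $f^{-1}(b)\times\{b\}$.

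First I would observe that the induced Riemannian metric on $f^{-1}(b)$ is the same whether $f^{-1}(b)$ is viewed inside $(M,g_M)$ or inside $(M,g_{\Gamma_f})$: any $v\in T_pf^{-1}(b)$ satisfies $df_pv=0$, so $g_{\Gamma_f}(v,v)=g_M(v,v)+g_B(df_pv,df_pv)=g_M(v,v)$. In particular the intrinsic geodesics of $f^{-1}(b)$ do not depend on which of the two ambient metrics one chooses.

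The heart of the argument is the following assertion: a curve $m(t)$ lying in $f^{-1}(b)$ is a geodesic of $(M,g_M)$ if and only if it is a geodesic of $(M,g_{\Gamma_f})$. Using $\iota$, the latter condition translates to the statement that $c(t)=(m(t),b)$ is a geodesic of $\Gamma_f\subset M\times B$, equivalently that $\nabla^{M\times B}_{c'}c'$ is orthogonal to $T_{c(t)}\Gamma_f$ in the product metric. Because $b$ is constant, $\nabla^{M\times B}_{c'}c'=(\nabla^M_{m'}m',0)$. A direct linear-algebra calculation identifies the orthogonal complement of $T_{(m,b)}\Gamma_f=\{(w,df_mw):w\in T_mM\}$ inside $T_mM\oplus T_bB$ as $\{(-df^*\nu,\nu):\nu\in T_bB\}$, and consequently $(\nabla^M_{m'}m',0)$ lies in this complement iff $\nu=0$ and $\nabla^M_{m'}m'=0$, i.e.\ iff $m$ is a geodesic of $(M,g_M)$.

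Finally I would combine these two observations with the standard characterization that a submanifold is totally geodesic in a given ambient space if and only if every intrinsic geodesic of the submanifold is also an ambient geodesic. The intrinsic geodesics of $f^{-1}(b)$ coincide in the two settings by the first observation, and whether such an intrinsic geodesic is ambient-geodesic is the same condition in $(M,g_M)$ and in $(M,g_{\Gamma_f})$ by the second observation. This yields the desired equivalence. I do not anticipate a genuine obstacle in this argument; the only real content is the orthogonal-complement computation inside the product metric $g_M\oplus g_B$, and the use of regularity of $b$ to guarantee that $f^{-1}(b)$ really is a submanifold with well-defined tangent spaces.
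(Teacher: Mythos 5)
Your proof is correct and takes essentially the same approach as the paper's: both arguments hinge on identifying the normal space of $\Gamma_f$ in the product metric $g_M\oplus g_B$ as $\{(-df^{*}\nu,\nu):\nu\in TB\}$ and observing that the covariant acceleration $(\nabla^M_{m'}m',0)$ is tangent to $\Gamma_f$ (equivalently, has no normal component) precisely when $\nabla^M_{m'}m'\in\ker df$. The paper phrases this through the second-fundamental-form condition on vector fields while you phrase it through the geodesic characterization of totally geodesic submanifolds, but the underlying linear-algebra computation is identical.
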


\begin{proof}
On the one hand, a regular level set $L=f^{-1}(b)\subset M$ is totally geodesic  on $M$ if and only if, 
given $X\in TL$ such that $df(X)=0$, 
$df(^M \nabla_X X)=0$. On the other hand, via the diffeomorphism between $M$ and the graph, $m \stackrel{\psi}{\mapsto} (m,f(m))$, a vector field tangent 
to $L$ is translated to a field $\hat X = (X,0)$ on $M\times B$ such that $df(X)=0$. We have 
\begin{gather}\label{eq 1}
^{\Gamma_f}\nabla_{\hat X}{\hat X}=proj_{T\Gamma_f}( ^{M\times B} \nabla_{(X,0)}(X,0)) = proj_{T\Gamma_f}(^M\nabla_X X, 0) \, .
\end{gather}
Furthermore, $\psi(L)=L\times\{b\}$ is totally geodesic on $\Gamma_f$ if and only if 
the right-hand side of \eqref{eq 1} is of the form $(Y,0)$. 

In particular, if $L$ is totally geodesic on $M$, it is totally geodesic on $\Gamma_f$. On the other hand, decomposing
this vector in tangential and normal part we have
\[(^M\nabla_X X, 0)=(W,df(W)) + (-df^*(Z),Z)\] 
with $df(W)=-Z$. So, $proj_{T\Gamma_f}(^M\nabla_X X, 0)=(Y,0)$ if and only if $Y=W$ and $df(Y)=0$, in particular, $Z=-df(Y)=0$ and $(^M\nabla_X X, 0)$ 
is already tangent, i.e., $df(^M\nabla_X X)=0$.

\end{proof}

\section{Flat vertizontal sections}\label{flat-leaves}

In this section,  $G \cdots E \to M$ will be a  
Riemannian submersion with totally geodesic fibers 
such that $E$ has non-negative curvature. 

We first need the following elementary lemma on flat sections on non-negatively curved spaces:

\begin{lem}\label{curvature lemma}
 Let $(Q,g)$ be a non-negatively curved manifold and suppose the plane spanned by $X$ and $U$  has zero sectional 
curvature. Then $^Q R(X,U)X = 0$. 
\end{lem}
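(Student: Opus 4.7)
The plan is to use a first-variation argument: regard the non-negative quadratic function $t \mapsto \langle {}^Q R(X,U+tV)X,\, U+tV\rangle$ for an arbitrary tangent vector $V$ at the same point, and exploit the hypothesis that it attains the value $0$ at $t=0$.

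First I would observe that non-negativity of sectional curvature is equivalent to the pointwise inequality $\langle {}^Q R(X,Y)X,\, Y\rangle \geq 0$ for \emph{every} pair of vectors $X,Y$, without any linear-independence requirement (when $X,Y$ are dependent the quantity vanishes). Applying this with $Y = U + tV$, and using bilinearity of $R$ together with the symmetry $\langle R(X,V)X,U\rangle = \langle R(X,U)X,V\rangle$, I would expand
\begin{equation*}
f(t) \;:=\; \langle {}^Q R(X,U+tV)X,\, U+tV\rangle \;=\; 2t\,\langle {}^Q R(X,U)X,\, V\rangle + t^{2}\,\langle {}^Q R(X,V)X,\, V\rangle,
\end{equation*}
where the constant term has been eliminated by the hypothesis $K(X,U)=0$, that is, $\langle {}^Q R(X,U)X,\, U\rangle=0$.

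Second, since $f(t)\geq 0$ for all $t\in\mathbb{R}$ and $f(0)=0$, the point $t=0$ is a global minimum, so $f'(0)=0$. This forces $\langle {}^Q R(X,U)X,\, V\rangle = 0$; since $V$ is arbitrary, non-degeneracy of the metric yields ${}^Q R(X,U)X = 0$, as claimed.

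There is really no hard step here: the only thing to notice is that one must deduce the vanishing of ${}^Q R(X,U)X$ from a \emph{first-order} criterion (the critical-point condition $f'(0)=0$) rather than from a discriminant inequality, since the coefficient $\langle {}^Q R(X,V)X, V\rangle$ of $t^{2}$ could itself vanish and so does not allow a direct quadratic analysis.
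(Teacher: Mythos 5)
Your proof is correct and is essentially the paper's own ``quadratic trick'' applied with the parametrization $U+tV$ in place of the paper's $tU+Z$; the critical-point condition $f'(0)=0$ and the paper's observation that a non-negative affine function of $t$ must have vanishing linear coefficient express the same idea. (Minor sign remark: with the paper's convention $R(X,Y)Z=\nabla_X\nabla_YZ-\nabla_Y\nabla_XZ-\nabla_{[X,Y]}Z$, the non-negative unnormalized curvature is $g(R(X,Y)Y,X)=-\langle R(X,Y)X,Y\rangle$, so your $f$ is actually $\leq 0$ rather than $\geq 0$; but then $t=0$ is a maximum and $f'(0)=0$ holds regardless.)
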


\begin{proof}
This follows immediately by the well-known ``quadratic trick'' (compare \cite{Chaves-Derdzinski-Rigas}): 
let $Z$ be an arbitrary vector. Consider the sectional curvature of the plane spanned by $X$ and $tU+Z$. 
Unnormalized, this is 
 \[
  g(R(X , tU + Z) tU+Z, X) = t^2 K(X, U) -  
 2tg (R( X, U) X, Z) + 
 K( Z, U) \, .
 \]
If $K(X,U)=0$  then the quadratic term vanishes and the resulting expression is a non-negative 
linear function. Thus 
we get that $K \geq 0$ implies that the linear coefficient 
$g(R( X, U)X ,Z) = 0$ for all $Z \in TQ$. 
\end{proof}

\begin{rem} The condition being symmetric in $X$ and $U$, the conclusion also holds interchanging $X$ and $U$. 
 This means that what really happens is best explained in terms of the curvature operator 
$\mathcal R: \Lambda^2(TQ) \to \Lambda^2(TQ)$: given $\eta \in \Lambda^2(TQ)$ we define 
$\phi_\eta: \Lambda^2 (TQ) \to \Lambda^4 (TQ)$, $\phi_\eta(\xi) = \eta \wedge \xi$. Then 
if $K(X,U)=0$, then $\mathcal R(X\wedge U) \in \ker \phi_{X\wedge U}$. This gives some rigidity 
to zero curvature sections, an example of which is the main result of this paper (see also proposition 1 in 
\cite{wilhelm-flat}).
\end{rem}

\begin{rem} One may notice that an analogous proof works in the non-positive curvature case. In fact, being here where the non-negativity requirement enters, our main results work replacing non-negatively curved by non-positively curved.\end{rem}

\noindent {\it Definition}. A vector $X \in  TM$ is said to be {\em $A$-flat} if 
$A_{\hat X} = 0$.  An $A$-flat vector $X$ is called regular if it can be locally extended to a 
vector field of $A$-flat vectors; we call such fields $A$-flat vector fields. 

We abuse notation by also calling $A$-flat, the horizontal, basic vectors $\hat X$ such that $A_{\hat X} = 0$. 
O'Neill equations imply that a vector is $A$-flat if and only if the curvature of all vertizontal planes containing 
$\hat X$ vanish. 

\begin{thm} \label{A-flat-accel}
 If $X$ is an $A$-flat vector field, then so is the covariant acceleration $\nabla_X X$.
\end{thm}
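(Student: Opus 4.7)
The plan is to feed Lemma \ref{curvature lemma} to the non-negatively curved total space $E$ and then extract the horizontal component of the resulting curvature identity. Since $X$ is an $A$-flat vector field, $A_{\hat X}=0$ identically on a neighborhood, and by the O'Neill formula for vertizontal sectional curvatures with totally geodesic fibers (noted just before the statement) this gives $K^E(\hat X,U)=0$ for every vertical $U$; Lemma \ref{curvature lemma} then yields $R^E(\hat X,U)\hat X=0$.

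The heart of the proof is a direct computation of the horizontal component of
\[
R^E(\hat X,U)\hat X \;=\; \nabla_{\hat X}\nabla_U\hat X - \nabla_U\nabla_{\hat X}\hat X - \nabla_{[\hat X,U]}\hat X,
\]
using three consequences of $A_{\hat X}=0$ throughout a neighborhood: first, $\nabla_{\hat X}\hat X=\widehat{\nabla_X X}$ (since $A_{\hat X}\hat X=0$ by antisymmetry of $A$); second, $\mathcal{H}(\nabla_U\hat X)=A_{\hat X}U=0$, so $\nabla_U\hat X$ is vertical and a second application of $A_{\hat X}=0$ kills $\mathcal{H}(\nabla_{\hat X}\nabla_U\hat X)$; third, $[\hat X,U]$ is vertical because $\hat X$ is basic, whence $\mathcal{H}(\nabla_{[\hat X,U]}\hat X)=A_{\hat X}[\hat X,U]=0$. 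Only the middle term therefore contributes to the horizontal side, producing $\mathcal{H}(\nabla_U\widehat{\nabla_X X})=A_{\widehat{\nabla_X X}}U$.

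The identity $R^E(\hat X,U)\hat X=0$ then collapses to $A_{\widehat{\nabla_X X}}U=0$ for every vertical $U$. The skew-symmetry of $A$ (which exchanges horizontal and vertical inputs up to sign) extends this to horizontal inputs as well, giving $A_{\widehat{\nabla_X X}}=0$, i.e. $\nabla_X X$ is $A$-flat.

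I expect the only real obstacle to be bookkeeping: keeping careful track of horizontal versus vertical components, and noticing that it is precisely the vector-field version of the $A$-flatness hypothesis (rather than $A$-flatness at a single point) that causes three of the four pieces in the decomposition of $R^E(\hat X,U)\hat X$ to vanish simultaneously throughout a neighborhood, so that the single surviving term yields a sharp obstruction on $\nabla_X X$.
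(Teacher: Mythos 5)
Your proof is correct and follows essentially the same route as the paper: both invoke Lemma \ref{curvature lemma} to deduce $R^E(\hat X,U)\hat X=0$ from $A_{\hat X}=0$, and both then extract the horizontal (equivalently, $\hat Z$-paired) component to isolate the single surviving term $A_{\widehat{\nabla_X X}}U$. The only cosmetic difference is that you derive the mixed O'Neill curvature identity from the definition of $R$ and torsion-freeness, whereas the paper quotes the formula $g_P(R(\hat X,U)\hat X,\hat Z)=-\langle\nabla_{\hat X}(A_{\hat X}\hat Z),U\rangle+\langle A_{\nabla_{\hat X}\hat X}\hat Z,U\rangle+\langle A_{\hat X}\nabla_{\hat X}\hat Z,U\rangle$ directly from Gromoll--Walschap.
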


\begin{proof}
 Let $Z$ be another vector field on $M$, and $U$ a vertical field on $E$.

Spelling out the curvature term in \eqref{curvature lemma} and using the identities in \cite{gromoll-walschap}, we have:
\begin{align*}
0= g_P(R(\hat X, U)\hat X , \hat Z)&=-\lr{\nabla_{\hat X}(A_{\hat X}\hat Z)	,U}+\lr{A_{\nabla_{\hat X}\hat X}\hat Z,U}+\lr{A_{\hat X}\nabla_{\hat X}\hat Z,U}\\
&=\lr{A_{\nabla_{\hat X}\hat X}\hat Z,U}
\end{align*}
and since the equality holds for all $\hat Z$ and $U$, $A_{\nabla_{\hat X}\hat X}=0$. The theorem follows from the identity $\widehat{\nabla_XX}=\nabla_{\hat X}\hat X$.\end{proof}

\noindent {\it Definition}. A submanifold $S$ of $M$ is said to be  $A$-flat 
if vectors tangent to 
$S$ are $A$-flat. It is maximal if for all  $s\in S$ and $X \in T_sM$, $X$ $A$-flat implies that 
$X \in T_sS$. 

Then, it immediately follows from theorem \ref{A-flat-accel} that 

\begin{thm} \label{A-flat-totally-geodesic} 
 A maximal $A$-flat submanifold $S$ is totally geodesic in $M$. 
\end{thm}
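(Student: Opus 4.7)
The plan is to verify that the second fundamental form of $S$ in $M$ vanishes identically. It suffices to show that $\nabla_X X|_p \in T_pS$ for every $p\in S$ and every $X_p \in T_pS$, after which polarization via $2\nabla_X Y = \nabla_{X+Y}(X+Y) - \nabla_X X - \nabla_Y Y$ extends the conclusion to $\nabla_X Y|_p \in T_pS$ for arbitrary vector fields $X,Y$ tangent to $S$.

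Given $X_p \in T_pS$, I would extend it to a smooth local vector field $X$ that is tangent to $S$ at every point of $S$ near $p$ (and then extended arbitrarily to a neighborhood of $p$ in $M$). Since $S$ is $A$-flat by hypothesis, every value of $X$ along $S$ is $A$-flat; in particular, the integral curve of $X$ through $p$ remains in $S$, so $X$ is $A$-flat along this curve. Now I would invoke Theorem \ref{A-flat-accel}: inspection of its proof shows that the identity
\[
\langle A_{\nabla_{\hat X}\hat X}\hat Z, U\rangle = \langle \nabla_{\hat X}(A_{\hat X}\hat Z), U\rangle - \langle A_{\hat X}\nabla_{\hat X}\hat Z, U\rangle
\]
evaluated at $p$ only requires $A_{\hat X} = 0$ along the horizontal lift of the integral curve of $X$ through $p$, which holds here. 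Hence $A_{\widehat{\nabla_X X}}|_p = 0$, i.e.\ $\nabla_X X|_p$ is $A$-flat at $p$. Maximality of $S$ then forces $\nabla_X X|_p \in T_pS$, as required.

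The main subtlety is the one just flagged: the statement of Theorem \ref{A-flat-accel} speaks of $A$-flat vector fields, but the field $X$ produced by extending a vector tangent to $S$ is generally $A$-flat only along $S$, not on an open set of $M$. This is harmless because the computation in Theorem \ref{A-flat-accel} is first-order along the flow line of $X$, so the vanishing of $A_{\hat X}$ along that single curve is what actually gets used. Once this is granted, the theorem reduces immediately to the combination of Theorem \ref{A-flat-accel} with the maximality hypothesis.
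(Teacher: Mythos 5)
Your proof is correct and takes essentially the same route as the paper: the paper states that Theorem~\ref{A-flat-totally-geodesic} ``immediately follows'' from Theorem~\ref{A-flat-accel}, and then records in a remark precisely the subtlety you flagged --- that the computation in Theorem~\ref{A-flat-accel} is tensorial/local enough to need $A$-flatness of $X$ only along $S$ (you localize further to the integral curve, which is fine), not on an open set of $M$. One small algebraic slip: $\nabla_{X+Y}(X+Y)-\nabla_X X-\nabla_Y Y = \nabla_X Y+\nabla_Y X$, not $2\nabla_X Y$; but since $[X,Y]$ is tangent to $S$ this gives the same conclusion for the second fundamental form, so the argument stands.
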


\begin{rem} Note that the proof of theorem \ref{A-flat-accel} relies on a tensorial identity, and theorem 
\ref{A-flat-totally-geodesic} uses only information of the vector field $X$ along the submanifold $S$. 
Therefore, theorem \ref{A-flat-totally-geodesic} holds without needing to extend $X$ to an open neighborhood 
of $S$ in $M$. This will be useful on example \ref{rigas-bundles}.\end{rem} 

Now we see that there are plenty of maximal $A$-flat submanifolds: 

\begin{lem}\label{involutive}
 The set of all $A$-flat vector fields is in involution.

\end{lem}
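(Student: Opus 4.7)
The plan is to deduce the involutivity directly from the O'Neill identity $2A_{\hat W}\hat Z = [\hat W,\hat Z]^V$ (for basic horizontal lifts), combined with the Jacobi identity on $E$. Concretely, given $A$-flat vector fields $X, Y$ on (an open subset of) $M$, I want to show $A_{\widehat{[X,Y]}} = 0$, where the horizontal lift of the bracket is identified with the horizontal part of the bracket of the lifts, $\widehat{[X,Y]} = [\hat X,\hat Y]^H$.

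First I would exploit $A$-flatness to flatten all the $H$--$V$ distinctions. Since $A_{\hat X}=0$, the above O'Neill identity forces $[\hat X,\hat Z]^V = 0$ for every basic horizontal $\hat Z$, so $[\hat X,\hat Z] = \widehat{[X,\pi_*\hat Z]}$; the same holds for $\hat Y$. In particular $[\hat X,\hat Y] = \widehat{[X,Y]}$ is already horizontal, so no information is lost by working with the bracket on $E$ rather than with the horizontal projection.

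The main computation is then the Jacobi identity
\[
[[\hat X,\hat Y],\hat Z] + [[\hat Y,\hat Z],\hat X] + [[\hat Z,\hat X],\hat Y] = 0,
\]
for an arbitrary basic horizontal $\hat Z$. By the previous step $[\hat Y,\hat Z] = \widehat{[Y,Z]}$ and $[\hat Z,\hat X] = \widehat{[Z,X]}$, so the vertical parts of the last two terms are $2A_{\hat X}\widehat{[Y,Z]}$ and $2A_{\hat Y}\widehat{[Z,X]}$ respectively, both zero by $A$-flatness of $X$ and $Y$. Therefore the vertical part of $[\widehat{[X,Y]},\hat Z]$ also vanishes, i.e.\ $A_{\widehat{[X,Y]}}\hat Z=0$ for every horizontal $\hat Z$.

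Finally, I would invoke the standard skew-symmetry $\langle A_{\hat W}U,\hat Z\rangle = -\langle A_{\hat W}\hat Z,U\rangle$ (valid in a Riemannian submersion, with $\hat W,\hat Z$ horizontal and $U$ vertical) to upgrade vanishing on horizontal vectors to vanishing on vertical ones, obtaining $A_{\widehat{[X,Y]}} = 0$. I do not expect a real obstacle here: once one observes that $A$-flatness is exactly the statement that basic brackets are already horizontal, the proof reduces to bookkeeping around a single application of Jacobi; note in particular that the symmetric polarization of Theorem \ref{A-flat-accel} (which would yield $\nabla_X Y + \nabla_Y X$ being $A$-flat) is not what is needed, and is indeed unnecessary for the antisymmetric statement.
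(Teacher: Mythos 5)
Your argument is correct and is essentially the paper's proof, just spelled out in more detail: the paper characterizes $A$-flatness by $g_P([\hat X,\hat Z],U)=0$ for all $Z$ on $M$ and $U$ vertical (which is your step that brackets with an $A$-flat basic field stay horizontal, combined implicitly with the skew-symmetry you make explicit at the end), and then invokes the Jacobi identity together with $[\hat X,\hat Z]=\widehat{[X,Z]}$. No gap, and no genuinely different route.
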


\noindent{\it Proof.} A vector field $X$ on $M$ is $A$-flat if and only if
$g_P([\hat X, \hat Z], U) = 0$
for all vector fields  $Z$ on $M$ and $U$ vertical vector field on $E$. Now the result follows from
Jacobi's identity and the fact that $[\hat X, \hat Z]=\widehat{[X,Z]}$ when $A_{\hat X}=0$. \qed

The $A$-flat vectors on the regular part of the distribution are regular.
On this set, we can integrate the 
distribution spanned by the $A$-flat vectors in $M$. We call such integral manifolds
{\em $A$-flat leaves}. Then by theorem \ref{A-flat-totally-geodesic}, we have

\begin{thm} \label{flat-leaves-totally-geodesic}
 Assume $E$ has non-negative curvature. Then the  $A$-flat leaves are totally geodesic in $M$.  
\end{thm}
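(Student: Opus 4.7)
The plan is to assemble the preceding three results---the involutivity lemma, the acceleration theorem, and the totally-geodesic criterion for maximal $A$-flat submanifolds---into a one-line deduction.

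First, I would observe that on the open set where the $A$-flat distribution is regular, the set of $A$-flat vectors forms a smooth distribution $D$ of locally constant rank. By Lemma \ref{involutive}, $D$ is involutive, so Frobenius' theorem yields integral manifolds, which are by definition the $A$-flat leaves $L$. Each such leaf has the property that $T_sL = D_s$ for every $s \in L$: the inclusion $T_sL \subseteq D_s$ is immediate from $L$ being $A$-flat, and equality follows because at a regular point every $A$-flat vector extends to a local section of the distribution, forcing it to lie tangent to the integral manifold through $s$. In particular, $L$ is a \emph{maximal} $A$-flat submanifold in the sense defined before Theorem \ref{A-flat-totally-geodesic}.

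With maximality established, Theorem \ref{A-flat-totally-geodesic} applies directly and gives that $L$ is totally geodesic in $M$. The only minor point to verify is that the hypothesis of Theorem \ref{A-flat-accel} is still usable here: one needs any tangent vector $X_s \in T_sL$ to be extended to an $A$-flat vector field. I would extend $X_s$ to a smooth section of $D$ along $L$; since the integral curves of such an extension stay inside $L$, the pointwise identity $A_{\hat X} = 0$ propagates along these curves, and inspection of the proof of \ref{A-flat-accel} shows this is all that is needed to conclude $\nabla_X X$ is $A$-flat at $s$, and hence tangent to $L$.

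The main (and only) obstacle worth flagging is this last bookkeeping step---making sure that the cascade of tensorial computations in \ref{A-flat-accel} and \ref{A-flat-totally-geodesic} goes through with vector fields defined only along the leaf rather than in an open set of $M$. Once that is noted, the theorem reduces to a direct citation of results already in hand.
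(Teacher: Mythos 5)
Your proposal is correct and follows the same route as the paper: the $A$-flat leaves are integral manifolds of the regular distribution $D$, hence maximal $A$-flat submanifolds, and Theorem \ref{A-flat-totally-geodesic} applies. The bookkeeping point you flag at the end---that $X$ need only be defined along the leaf because the key computation in Theorem \ref{A-flat-accel} is tensorial---is exactly what the paper itself addresses in the remark following Theorem \ref{A-flat-totally-geodesic}.
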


For the conclusion of the proof of theorem \ref{main2}, what is missing is regularity considerations. 
Let $D_b$ be as in Theorem \ref{main2}, i.e., the set of all $A$-flat vectors, then $M$ is partitioned according 
to $0 \leq \dim D_b \leq \dim M$. It is easy to see that the set of points in $M$ such that $\dim D_b$ is locally
{\em minimal} is open and dense. Indeed, $D_b$ is the kernel of a continuous family of linear maps parametrized by 
$m\in M$, and, as such, the function $d:M\to \mathbb{N}$, $d(m)= \dim(D_m)$ is lower semicontinuous. Thus the set 
of local minima, on which the rank of $D_m$ is locally constant
 and $D_m$ is a regular foliation, is open and dense (compare \cite{lewis}). 

Now let us deduce theorems \ref{main-kk} and \ref{main} from theorem \ref{main2}: 

Recall that if $G\cdots E \to M$ is a principal Riemannian submersion, 
then, for horizontal fields $X,Z$ and $U$ vertical,  the 
O'Neill tensor $g_P(A_X U, Z) = - b_G(\Omega(X,Z),\xi_U)$, 
where $\Omega$ is the curvature form of the 
bundle connection induced by the horizontal-vertical decomposition, $b$ is a biinvariant metric on $G$ and $\xi_U \in \mathfrak{g}$ is the infinitesimal generator of $U$. Thus, 
the condition of being $A$-flat can be completely written in terms of the curvature of the 
connection form of the bundle: it is in the kernel $\{X \in T_mM \,\,: \,\, \Omega(X,Z) =0 \,\,\forall\,\, Z\}$

Consider now a pullback $G\cdots f^*P \to M$ of a non-degenerate principal Riemannian submersion 
$G \cdots P \to B$, with 
bundle curvature form $\Omega$. Since the main theorem concerns regular values $b \in B$, by restricting 
to the open set of regular values, we might assume that $f$ is a submersion.
 
The curvature form of the pullback bundle is just the pullback form 
$f^*\Omega$. Thus $X \in T_mM$ is $A$-flat if and only if $\Omega(df_m(X),df_m(Z)) = 0$ for all $Z\in T_mM$, and 
since $f$ is a submersion then for all $Y\in T_{f(b)}B$ there exists $Z \in T_mM$ such that $df_m(Z) = Y$. This 
means that  $\Omega(df_m(X),Y) = 0$ for all $Y\in T_f(m)B$, and since by hypothesis $G \cdots P \to B$ is non-degenerate, 
it follows that $df_m(X) = 0$. We have thus concluded that $A$-flat vectors are tangent to level sets of $f$, and maximal $A$-flat submanifolds 
and $A$-flat leaves
are exactly the fibers of $f$. 
Then theorem \ref{A-flat-totally-geodesic} implies that the fibers of 
$f$ are totally geodesic on $M$. 

For theorem \ref{main}, put the graph metrics on the total space and on $M$. Then we are in a Kaluza-Klein 
construction and we can apply theorem \ref{main-kk}
 {\em with respect to the graph metric} of $M$.  However,  lemma \ref{tanto-faz} then 
finishes the proof.

\section{Stability of regular totally geodesic fibers}

In this section we prove theorem \ref{stability}.

Let $S\subset M$ be a compact submanifold, possibly with boundary, and $\nu$ its normal bundle, with $\delta$-disk 
bundle $\nu_\delta$. For $\delta$ small enough, the exponential map $\exp:\nu_\delta \to M$ is a diffeomorphism 
onto a tubular neighbourhood $U_\delta \subset M$. There is radial projection $p:U_\delta \to S$, given 
by $u \mapsto p(u)$, the (unique) closest point on $S$ to $U$. With this context, we have

\begin{prop} \label{pre-stability}
There exists $\delta>0$ such that, if $L$ is a complete totally geodesic submanifold contained in $U_\delta$, 
then $p|_L:L\to S$ is an immersion.
\end{prop}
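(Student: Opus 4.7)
My plan is to argue by contradiction. Suppose no such $\delta$ works; then there exist sequences $\delta_n \downarrow 0$, complete totally geodesic submanifolds $L_n \subset U_{\delta_n}$, points $q_n \in L_n$, and unit vectors $v_n \in T_{q_n}L_n$ with $dp(v_n)=0$, so that $v_n$ is tangent to the fibre $p^{-1}(p(q_n))$. The idea is to pass to a limit $(q_\infty, v_\infty)$, recognise $v_\infty$ as a unit normal to $S$, and then use completeness of the $L_n$ to push the corresponding $M$-geodesics out of the shrinking tubes $U_{\delta_n}$.

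First I would extract a limit. Since $S$ is compact and $d(q_n,S)<\delta_n\to 0$, after passing to a subsequence one has $q_n\to q_\infty \in S$ and $v_n\to v_\infty$ in $TM$ with $|v_\infty|=1$. Continuity of $dp$ gives $dp(v_\infty)=0$, and because $q_\infty\in S$ the fibre $p^{-1}(q_\infty)$ is the normal slice $\exp_{q_\infty}(\nu_{q_\infty,\delta})$, whose tangent space at $q_\infty$ is $\nu_{q_\infty}$. Hence $v_\infty$ is a unit vector normal to $S$ at $q_\infty$.

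Next I would compare the $L_n$-geodesics with the normal geodesic $\gamma_\infty$. Fix $\delta_0\in(0,\delta)$ small enough that the normal exponential is a diffeomorphism on $\nu_{2\delta_0}$ and every normal geodesic of length less than $2\delta_0$ realises the distance to $S$. Then the $M$-geodesic $\gamma_\infty(t)=\exp_{q_\infty}(tv_\infty)$ satisfies $d(\gamma_\infty(\delta_0),S)=\delta_0$. On the other hand, since $L_n$ is complete and totally geodesic, the $L_n$-geodesic $\gamma_n$ with initial data $(q_n,v_n)$ is an $M$-geodesic defined for all $t\in\mathbb{R}$ and contained in $L_n\subset U_{\delta_n}$, so $d(\gamma_n(\delta_0),S)<\delta_n$. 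Continuity of the geodesic flow on its open domain in $TM$ gives $\gamma_n(\delta_0)=\exp_{q_n}(\delta_0 v_n)\to\exp_{q_\infty}(\delta_0 v_\infty)=\gamma_\infty(\delta_0)$, so $d(\gamma_n(\delta_0),S)\to\delta_0$, contradicting $d(\gamma_n(\delta_0),S)<\delta_n\to 0$.

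The main subtlety is ensuring that $\gamma_\infty$ is defined up to time $\delta_0$ in $M$, so that continuity of the $M$-geodesic flow can be applied along the converging initial data; this is precisely what forcing $\delta_0$ to lie inside the radius of the tubular neighbourhood provides, via the compactness of $S$. It is essential that completeness is assumed of the $L_n$ rather than of $M$: a priori the $M$-geodesic with initial data $(q_n,v_n)$ need not exist up to time $\delta_0$, but by total geodesy it coincides with the $L_n$-geodesic and hence is defined for all $t\in\mathbb{R}$.
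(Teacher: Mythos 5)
Your proof is correct, but it takes a genuinely different route from the paper's. The paper first reduces the proposition to Lemma~\ref{lem stab} and proves that lemma by a direct, quantitative estimate: setting $\eta = \tfrac12 d(\cdot,S)^2$ and considering $h=\eta\circ\gamma$ along a unit geodesic $\gamma$ tangent to a normal fiber, it bounds the Hessian of $\eta$ from below on a tube around $S$ (using that the second fundamental forms of the normal fibers vanish on $S$, and compactness of $S$ for uniformity of the estimates), obtains $h''\geq 2/5$, hence $h(t)\geq t^2/5$, and concludes $\gamma$ escapes $U_\delta$ within an explicit time. Your argument is instead soft and by contradiction: you extract a compactness limit $(q_\infty, v_\infty)$ of putative bad data, recognise $v_\infty$ as a unit normal to $S$ because on $S$ the fibers of $p$ are the normal slices, and invoke continuity of the geodesic flow to see that the nearby geodesics $\gamma_n$ must escape the shrinking tubes $U_{\delta_n}$, contradicting $L_n\subset U_{\delta_n}$. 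Both arguments are valid; the paper's is constructive, producing an explicit $\delta$ and escape time that can be reused downstream, whereas yours sidesteps the Hessian and second-fundamental-form bookkeeping at the cost of nonconstructivity. Both lean on compactness of $S$, but for different purposes: the paper for uniform $C^2$-control of $\eta$, you for subsequence extraction. Your closing remark correctly identifies why completeness of the $L_n$ (not of $M$) is what makes $\gamma_n(\delta_0)$ exist; one could equivalently observe that $(q_\infty,v_\infty,\delta_0)$ lies in the open domain of the geodesic flow, so nearby initial data also reach time $\delta_0$, but invoking completeness as you do is cleaner.
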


Note that the kernel of $Dp$ is given by vectors tangent to the fibers $F^\delta_x=\exp_x(\nu_\delta)$. Since $L$ is 
totally geodesic and complete, proposition \ref{pre-stability} is reduced to the following:

\begin{lem}\label{lem stab}
 There exists $\delta>0$ such that, if $m\in U_\delta$ and $v_m\in \ker Dp_m$, 
then $\exp_m tv\notin U_\delta$ for some $t$.\end{lem}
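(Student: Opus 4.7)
The plan is to argue by contradiction via a compactness/limit argument, using essentially just the smoothness of the exponential map on a tubular neighborhood of $S$. Suppose no such $\delta>0$ exists; then for every $n \in \bb{N}$ we obtain a point $m_n \in U_{1/n}$ and a unit vector $v_n \in \ker Dp_{m_n}$ such that the geodesic $\gamma_n(t) := \exp_{m_n}(tv_n)$ remains in $U_{1/n}$ throughout its maximal domain. Because $\overline{U_{1/n}}$ is compactly contained in $M$ (as $S$ is compact), each $\gamma_n$ is in fact defined for all $t \in \bb{R}$.

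My first step would be to extract limits. Writing $m_n = \exp_{x_n}(w_n)$ with $x_n = p(m_n) \in S$ and $w_n \in \nu_{x_n}$ satisfying $|w_n|<1/n$, compactness of $S$ yields a subsequence with $x_n \to x_0 \in S$; automatically $m_n \to x_0$ as well. Trivializing the tangent bundle near $x_0$ and using compactness of the unit sphere, a further subsequence yields $v_n \to v_0 \in T_{x_0}M$ with $|v_0|=1$. The crucial identification would be $v_0 \in \nu_{x_0}$: since $T_{m_n}F^{1/n}_{x_n} = d(\exp_{x_n})_{w_n}(\nu_{x_n})$ and the differentials tend to the identity as $w_n \to 0$, any limit of vectors $v_n \in T_{m_n}F^{1/n}_{x_n}$ must lie in $\nu_{x_0}$.

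Next I would invoke continuous dependence of geodesics on initial data to conclude that $\gamma_n \to \gamma_0$ uniformly on compact time intervals, where $\gamma_0(t) := \exp_{x_0}(tv_0)$ is a radial normal geodesic. Choose any $T>0$ smaller than a uniform lower bound for the focal radius of $S$, which exists by compactness of~$S$. Then $d(\gamma_0(T),S) = T > 0$, and by convergence $d(\gamma_n(T),S) \to T$; but the assumption $\gamma_n(T) \in U_{1/n}$ forces $d(\gamma_n(T),S) < 1/n \to 0$, giving the desired contradiction.

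The main obstacle I anticipate is the subspace-convergence statement $T_{m_n}F^{1/n}_{x_n} \to \nu_{x_0}$, which is the only place where the geometric content of Fermi coordinates really enters. It rests on the smoothness of the exponential map at the zero section of $\nu$, so it is conceptually routine but requires a careful choice of local trivializations to make precise; once that is in hand, the remainder of the argument is a standard continuous-dependence and contradiction exercise.
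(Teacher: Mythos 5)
Your argument is correct, but it takes a genuinely different route from the paper. The paper gives a \emph{quantitative} estimate: it introduces $\eta(x)=\tfrac12 d(x,S)^2$, Taylor-expands it near $S$ (using smoothness of $\eta$ in a tubular neighbourhood, the formula for its Hessian, and control of the third derivative via the second fundamental form of the fibers $F^\delta_x$), and shows that along a unit geodesic with initial velocity in $\ker Dp$ one has $h''(t)\geq 2/5$ where $h=\eta\circ\gamma$; quadratic growth then forces escape from $U_\delta$ once $\delta^2<1/5$. Your approach is a soft compactness/contradiction argument: negate the statement, extract a convergent subsequence of ever-closer geodesics, identify the limit as a unit-speed normal geodesic emanating from $S$, and derive a contradiction from $d(\gamma_n(T),S)\to d(\gamma_0(T),S)=T>0$ while $d(\gamma_n(T),S)<1/n\to 0$. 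Both proofs are sound. Your version is shorter and avoids the Hessian computation for $\eta$, at the cost of being non-constructive (the paper's proof yields an explicit $\delta$, which can matter for the accompanying stability result); the paper's version makes the geometric mechanism (near-convexity of $\eta$ along normal-fiber geodesics) transparent, which is useful when one later wants effective control in specific examples. Two small technical points in your write-up: the relevant radius for $d(\gamma_0(T),S)=T$ is the normal injectivity radius (reach) of $S$, not the focal radius --- these can differ, though both are positive by compactness; and, as you yourself flag, the subspace-convergence $T_{m_n}F^{1/n}_{x_n}\to\nu_{x_0}$ should be formalized via local trivializations of $\nu$ (e.g.\ Fermi coordinates) before taking the limit of the $v_n$. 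Neither affects correctness.
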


\begin{proof}[Proof of Lemma \ref{lem stab}:] 

The idea is simply to observe that, close enough to $S$, we are 
essentially in an Euclidean situation illustrated by, say, $S$ being the $z$ axis in $\bb R^3$ and $v=(v_1,v_2,0)$ a 
vector tangent to $\bb R^2 \times \{z_0\}$ at a point $(x_0,y_0,z_0)$. Let us proceed: denote by $\eta:M\to \bb R$  the map defined by  half of the square  of 
 the distance to $S$, i.e., $\eta(x)=\frac{1}{2}d(x,S)^2$. The following facts are either elementary or 
well-known: (e.g. \cite{ambrosio-mantegazza}): $\eta$ is smooth on $U_{\epsilon}$, and  for $x\in S$, its derivatives 
satisfy
\begin{itemize}
 \item The derivative $d\eta_x = 0$.
\item The Hessian $d^2\eta_x(v,w) = g(\Pi v,\Pi w)$, where $g$ is the metric on $M$ and $\Pi$ is the orthogonal projection onto the 
normal bundle of $S$. Note that since $d\eta_x=0$, at points of $S$ $d^2\eta$ is a true (e.g. independent of the 
metric) Hesssian. 
\item $d^3\eta_x$ is  linearly expressed in terms of the second fundamental form of $S$.
\end{itemize}
Also, the second fundamental form of the fibers $F^\delta_x$ is zero at $x$. Extend $\Pi$ of the previous bullet to 
represent the orthogonal projection onto the tangent bundle of the fiber $F_x^{2\delta}$ at any point. 
Now given $\epsilon>0$, we can uniformly choose 
$\delta$ such that, for $d(y,S)<2\delta$, we have that 
\begin{itemize}
 \item The norm of the second fundamental form of each fiber $F^{2\delta}$ is less than $\epsilon$. This implies that 
by choosing $\epsilon$ wisely, we can assure that
for any unit geodesic $\gamma(t), t\in [0,1]$ with inital velocity vector tangent to the fiber, then 
\[
 g(\Pi \dot{\gamma}(t),\Pi \dot{\gamma}(t)) > g((\mathbf{1} - \Pi) \dot{\gamma}(t),(\mathbf{1} - \Pi) \dot{\gamma}(t))\, .
\]
(or, equivalently $2g(\Pi \dot{\gamma}(t),\Pi \dot{\gamma}(t)) > g(\dot{\gamma}(t), \dot{\gamma}(t))$)
for as 
long as $\gamma(t) \in U_{2\delta}$. Since 
\item The Hessian satisfies 
\[
 d^2\eta(v,v) \geq \frac{9}{10}g(\Pi v,\Pi v) - \frac{1}{10}g((\mathbf{1} - \Pi) v,(\mathbf{1} - \Pi) v)\, .
\]

\end{itemize}

Consider now the function $h(t)=\eta(\gamma(t))$, for a unit geodesic $\gamma$ tangent to the normal fiber and such 
that $0\leq d(\gamma(0),S)< \delta$, which translates to $0\leq h(0)<\frac{\delta^2}{2}.$ By reversing the orientation 
of $\gamma$ if neccessary, we can assume that $h'(0) \geq 0$. Now 

\begin{eqnarray*}
 h''(t) &=& d^2\eta_{\gamma(t)}(\dot{\gamma}(t), \dot{\gamma}(t)) \\
        &\geq&  \frac{9}{10}g(\Pi \dot{\gamma}(t),\Pi \dot{\gamma}(t)) -
                \frac{1}{10}g((\mathbf{1} - \Pi)\dot{\gamma}(t),(\mathbf{1} - \Pi)\dot{\gamma}(t) )  \\
        &\geq&  \frac{4}{5}g(\Pi \dot{\gamma}(t),\Pi \dot{\gamma}(t)) \\
        &\geq& =  \frac{2}{5}g( \dot{\gamma}(t),\dot{\gamma}(t)) =  2/5 \, ,       
\end{eqnarray*}
for $t\in [0,1]$ and for as long as $\gamma(t)$ lies in $U_{2\delta}$. Thus we have a function $h$ such 
$h(0)\geq 0, h'(0)\geq 0$ and $h''(t) > 2/5$. Thus $h(t)\geq \frac{1}{5}t^2$, which means that for $\delta^2<1/5$ in addition 
of the previous conditions, the geodesic $\gamma(t)$ leaves $U_\delta$. \qed 
\end{proof}

Now theorem \ref{stability} is just stating proposition \ref{pre-stability} in the situation we are interested in, that is,
when $S$ is a subset of a fiber of $f:M \to N$. 

\section{Examples} \label{examples}

In this section we provide several examples where the obstruction applies in different ways. 

\begin{itemize}
\item In the first two examples, the main obstruction (theorem \ref{main})  vanishes for the canonical metrics. However, in the 
second example the ``secondary'' obstruction furnished by proposition \ref{A-flat-totally-geodesic} does not vanish for 
the canonical round metric on the base sphere; a deformation is necessary to make all obstructions vanish.  
\item In the third and fourth examples, the obstruction is absolute: we show that for the pullback maps presented, 
for any given metric the fibers cannot be totally geodesic. These examples arise as pullbacks of the Gromoll-Meyer construction, which 
we think in the context of the pullback of the Hopf map $h: S^7 \to S^4$ as in example \ref{sp(2)-as-pullback}; that 
is, we  pullback the  fibration $S^3 \cdots Sp(2) \to S^7$ by appropriate maps $f:X \to S^7$. However, 
since the canonical connection of $S^3 \cdots Sp(2) \to S^7$ is degenerate (being zero along the Hopf fibers), we go all the way 
to the Hopf bundle and pull back  $S^3 \cdots S^7 \to S^4$ by maps $f: X \to S^4$ of the form 
$f = a \circ h \circ \phi$, where $\phi:X \to S^7$ and $a$ is the antipodal map of $S^4$. All actions in sight will be isometric 
with respect to the induced connections and metrics.
In both examples, our results implies that one cannot put non-negative curvature in the total space of 
these bundles via the pullback construction.  

\item Finally, we describe the pullback structure of Kervaire spheres. Here,
in contrast with the previous examples, 
there is no {\em topological} obstruction to the presence of the totally geodesic foliation, however the authors have not 
been able to describe a metric with totally geodesic fibers in that case. 
\end{itemize}

\subsection{Wilhelm bundles} \label{wilhelm-bundles}
Consider the bundles constructed on \cite{wilhelm-lots}: let $Sp(2,m)$ be the subset of $m$ copies of $S^7$ defined by the following condition
\[Sp(2,m)=\{(u_1,...,u_m)\in (S^7)^m~ |~ h(u_1)=ah(u_2),~\tilde h(u_{i})=ah(u_{i+1})~for~i>1\}\]
where   $a$ and $h$ are as in example \ref{sp(2)-as-pullback} and 
$\tilde h$ is the dual Hopf map corresponding to the left $S^3$-action on $S^7$. 
Quotients of these bundles by free $S^3 \times \dots \times S^3$-actions 
give models of some 3-sphere bundles over $S^4$, and, in particular, exotic spheres. 

As we observed earlier, the pull-back diagram of  $Sp(2)$ endowed with its canonical metric does not have 
the obstruction given by Theorem \ref{main}. We also observe that $Sp(2,m+1)$ fits in the following diagram
\begin{equation}
\begin{xy}\xymatrix{Sp(2,m+1)\ar[d]\ar[r]^-{pr_{m+1}} & S^7\ar[d]^{ah}\\Sp(2,m)\ar[r]^-{\tilde h\circ pr_m}&S^4}\end{xy}
\end{equation} 
where $pr_m:Sp(2,m)\to S^7$ is the projection in the last coordinate and the unnamed 
down arrow is the projection in the first $m$ coordinates. Now, induction  on $m$ 
easily proves that $\tilde h\circ pr_m$ has totally geodesic fibers, as required from Theorem \ref{main}.

\subsection{Rigas bundles} \label{rigas-bundles}
Let $\phi_k:S^4 \to S^4$ be a degree $k$ map, and $r_k= \phi_k \circ h: S^7 \to S^4$. Then the bundles 
$\tilde P_k$ constructed in \cite{rigas-BSMF} are defined as the pullback of Hopf by $r_k$ (for example, 
if $\phi_{-1}$ is represented by the antipodal map of $S^4$, then $\tilde P_{-1}$ is
$S^3 \cdots Sp(2) \to S^7$ as in example \ref{sp(2)-as-pullback}). 
\begin{equation}
\begin{xy}\xymatrix{\tilde P_k \ar[d]\ar[r]^-{} & P_k\ar[r]\ar[d]& S^7\ar[d]^h\\S^7\ar[r]^-{h}&S^4\ar[r]^{\phi_k}&S^4}\end{xy}
\end{equation}

We choose $\phi_k:S^4 \to S^4$ to be the suspension of $p_k$, the quaternion $k$-th power map $S^3 \to S^3$ (note that, 
for $k=-1$, this is different from the antipodal map). We suspend this map in the simplest way, 
\[
S^4 
\supset
\bb R \times \bb H
\ni
 \begin{pmatrix}
  x \\ y 
 \end{pmatrix}
\stackrel{\phi_k}{\mapsto}
\frac{1}{\sqrt{x^2 + |y|^{2k}}}
 \begin{pmatrix}
  x \\ y^k
 \end{pmatrix} \, .
\]
 The critical values of maps $\phi_k$, and {\em a fortiori} $f_k$, since the Hopf map is a submersion,
 are given by $(\pm 1, 0)^\top$ and the suspensions of the meridians 
 $(x,\cos(\ell\pi/k) + \sin(\ell\pi/k) \hat \alpha)$, $\alpha$ a purely imaginary quaternion and $1<  \ell < k$. 
Restricted to the inverse images 
of the complement 
of such points, $\phi_k$ is a local diffeomorphism. Therefore, the regular fibers of $r_k$ will be given by 
sets of $k$ disjoint standard Hopf fibers, and the obstruction given by theorem \ref{main} also vanishes in this case for the canonical
round metric on $S^7$. 
However, if $|k|\geq 2$ the secondary obstruction given by theorem \ref{A-flat-totally-geodesic} does {\em not} vanish for the 
round metric on $S^7$ which projects by the Hopf map to the round metric on $S^4$, since the pullback by Hopf of the 
suspensions of the meridians $(x,\cos(\ell\pi/k) + \sin(\ell\pi/k) \hat \alpha)$ will not be totally geodesic. Note that, 
by continuity, the singular fibers detect negative curvature in the regular set that was invisible by just using 
theorem \ref{main}.

Therefore, if one wants to construct a pullback metric of non-negative curvature on 
the Rigas bundles, the metric on $S^4$ must be changed so that this suspended meridians are totally geodesic, 
(by making $S^3$ to be a cylinder $S^2 \times I$, in  a set that contains the meridians), and then the metric 
on $S^7$ is defined by a Kaluza-Klein procedure over the Hopf map.

\subsection{Exotic 7-spheres} \label{duran-puttman-rigas-bundles} Consider the map $\phi_n: S^7 \to S^7$ given by the $n$-th power of the 
Cayley octonions $\bb O$. If we write a unit octonion
$q = \cos(t) + \alpha\sin(t)$ where $\alpha$ is purely imaginary, then 
$\phi_n(t) = \cos(nt) + \alpha\sin(nt)$. Pulling back the bundle $S^3 \cdots Sp(2) \to S^7$ by $\phi_n$ we obtain principal $S^3$-bundles 
$S^3 \cdots E^{10}_n \to S^7$. Now writing octonion as pairs  $(a,b)^\top$ of (column) quaternions, then the total space is given 
by 
\[
 E_n \cong 
\left\{
\begin{pmatrix}
 a & c \\
b & d 
\end{pmatrix}
\,\,
\Big|
\,\,
\left(\hspace{-2pt}
\phi_n\hspace{-3.5pt} 
\begin{pmatrix}
 a \\
b
\end{pmatrix}~
\begin{matrix}
 c \\
d
\end{matrix}
\right)
\in Sp(2)
\right\} \, .
\]
With the projection onto $S^7$ given by the projection onto the first column $(a,b)^\top$.
The unit quaternions act on $E_n$, by 
\[
 q\star 
\begin{pmatrix}
 a & c \\
b & d 
\end{pmatrix}
=
\begin{pmatrix}
 q a \bar q & qc \\
q b \bar q & qd 
\end{pmatrix} \, .
\]

The following is proven in \cite{DPR}: the quotient of $E^{10}_n$ by this action is diffeomorphic to 
$\Sigma^7_n$, $n$-times the Gromoll-Meyer sphere in the group of 7-dimensional 
homotopy spheres. Going all 
the way to the Hopf bundle, consider $E^{10}_n$ as the pullback of the Hopf bundle by $f_n = a\circ h\circ\phi_n$, 
where $a$ and $h$ are as in examples \ref{sp(2)-as-pullback} and \ref{wilhelm-bundles}. 
\begin{equation}
\begin{xy}\xymatrix{\tilde E_n \ar[d]\ar[r]^-{} & Sp(2)\ar[r]\ar[d]& S^7\ar[d]^h\\S^7\ar[r]^-{\phi_n}&S^7\ar[r]^{a\circ h}&S^4}\end{xy}
\end{equation} 

\begin{rem}
 It is instructive to compare these spaces to Rigas' bundles on the previous section; here the power map is 
on the Cayley numbers as the last map of the composition giving the pullback, and there the power map (of the quaternions) 
is the first step in the composition. 
Both cases are pullbacks of the Hopf bundle over $S^4$ by functions $S^7\to S^4$, and the total spaces $\tilde P_k$ of  and 
$E_n$  are diffeomorphically related 
by the formula $E_n \cong \tilde P_k$, where $n = k(k+1)/2 \mod 12$ \cite{barros}. However, for the presentation 
given by the pullback by $r_k$  the main obstruction vanishes, and by modifying the metric all obstructions can be eliminated,
whereas we will presently see that this is impossible for the pullbacks by $f_n$:  no metric on $S^7$ makes the regular fibers totally geodesic, and thus any pullback metric on 
$E_n = f_n^\ast{\text{Hopf}} $ has some negative curvature. In principle, the O'Neill tensor of the exotic actions could push the curvature in the base exotic sphere to be 
non-negative, altough recent results 
suggests that the odds are not good \cite{wilhelm-flat}; this comment also applies to the exotic 8-spheres of next section.
\end{rem}

We have 

\begin{lem}
 The only singular value of $f$ is the south pole $(-1,0)\in S^4\subset \bb R \times \bb H$. 
\end{lem}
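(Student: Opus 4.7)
The plan is to show that the critical points of $f_n$ coincide with those of $\phi_n$, identify the latter explicitly, and then compute their images in $S^4$.

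First, I would analyze $\phi_n$ directly. A unit octonion $q\neq\pm 1$ admits a polar decomposition $q = \cos(t) + \alpha\sin(t)$ with $t\in(0,\pi)$ and $\alpha\in S^6\subset\mathrm{Im}\,\bb{O}$. Since $q$ alone generates an associative (in fact commutative) subalgebra of $\bb{O}$ isomorphic to $\bb{C}$, the De Moivre-type identity $\phi_n(q)=q^n=\cos(nt)+\alpha\sin(nt)$ holds. Differentiating, the $\partial_t$-direction maps to a vector of norm $n$, while a unit perturbation $\delta\alpha$ orthogonal to $\alpha$ is mapped to a vector of norm $|\sin(nt)|$. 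Hence $\phi_n$ fails to be a submersion exactly on the six-spheres
\[
S_k \;=\; \{\cos(k\pi/n)+\alpha\sin(k\pi/n)\,:\,\alpha\in S^6\},\qquad 1\le k\le n-1,
\]
on which the image of $d\phi_n$ is one-dimensional (spanned by $\alpha$ as a tangent vector at $\pm 1$). At the two poles $q=\pm 1$ the parametrization degenerates, but a direct computation along curves $\cos(s)+v\sin(s)$ (respectively $\cos(\pi-s)+v\sin(\pi-s)$) gives $d\phi_n|_{\pm 1}(v)=\pm nv$, so $\phi_n$ is a local diffeomorphism at the poles.

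Next, since $a$ is a diffeomorphism and $h$ is a submersion, $a\circ h$ is itself a submersion $S^7\to S^4$. At any regular point of $\phi_n$ the composition $df_n=d(a\circ h)\circ d\phi_n$ is therefore surjective. At a point of $S_k$, however, $d\phi_n$ has rank $1$, so $df_n$ has rank at most $1<4=\dim S^4$. Thus the critical points of $f_n$ are exactly the union of the $S_k$'s, and the critical values are $(a\circ h)(\phi_n(S_k))=(a\circ h)((-1)^k)$ for $k=1,\dots,n-1$.

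Finally, under the identification $\bb{O}=\bb{H}^2$ the points $1$ and $-1$ become $(1,0)$ and $(-1,0)$; these lie in a common Hopf fiber, namely the right $S^3$-orbit $\{(q,0):q\in S^3\}$ (since $-1=1\cdot(-1)$). Hence $h(1)=h(-1)$ is a single point of $S^4$, namely the north pole $(1,0)\in\bb{R}\times\bb{H}$; applying the antipodal map $a$ sends it to the south pole $(-1,0)$, as claimed. The main obstacle is the rank computation in the first step; the non-associativity of $\bb{O}$ is circumvented by observing that each individual unit octonion generates an associative subalgebra, which validates the De Moivre identity $q^n=\cos(nt)+\alpha\sin(nt)$.
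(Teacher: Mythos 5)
Your proof is correct and follows essentially the same route as the paper: identify the critical set of $\phi_n$ (the ``latitude'' six-spheres $S_k$, on which $\phi_n$ is constant), note that $a\circ h$ is a submersion so the critical points of $f_n$ are exactly those of $\phi_n$, and then compute the single critical value by tracking $\pm 1$ through the Hopf map and the antipodal map. Your version is somewhat more explicit than the paper's (e.g.\ in verifying that the poles $\pm 1$ are regular points of $\phi_n$, a point the paper leaves implicit and even slightly blurs when it speaks of the kernel of $d\phi_n$ on ``the preimage of $\pm 1$''), and this extra care is welcome. One tiny slip: $d\phi_n|_{-1}(v)=(-1)^{n+1}nv$, not $-nv$ in general, but since you only need that this is an isomorphism the conclusion is unaffected.
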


\noindent {\it Proof of lemma.} Observe that the only singular values of $\phi_n$ are $\pm 1 \in S^7 \subset \bb O$, on the preimage of 
which the derivative of $\phi_n$ has a kernel of dimension at least 6 ($\phi$ being constant on the distance spheres 
$\cos(\frac{k\pi}{n}) + \alpha \sin(\frac{k\pi}{n})$. Thus the only 
singular value of $h\circ \phi_n$ is the class of the Hopf fiber through 1, which maps to $(1,0) \in S^4\subset \bb R \times \bb H$.
The lemma now follows by composing with the antipodal map. \qed

\medskip

Restricted to the set  $\phi_n^{-1}(\pm 1)$, the map $\phi_n$ is a (trivial) $n$-fold covering. Thus, the inverse image 
of regular values in $S^4$ consists the inverse image of Hopf fibers by $\phi_n$, which are $n$ disjoint 
3-spheres, each one contained in a ``belt''
$\{ \cos(t) + \alpha \sin(t) \,\, | \,\, \frac{k\pi}{n} < t < \frac{(k+1)\pi}{n} \}$. 

The singular  set
$\phi^{-1}(-1,0)$ is the union of the exponential 3-sphere $(x,0), x$ a unit quaternion, 
with the distance spheres  $\cos(\frac{k\pi}{n}) + \alpha \sin(\frac{k\pi}{n})$.

We have now 

\begin{prop} \label{facemask}
if $n>1$, there is no metric on $S^7$ such that the inverse images of the regular values of $f_n$ are totally geodesic.
\end{prop}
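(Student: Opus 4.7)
The plan is to apply Proposition \ref{pre-stability} with $S=\Sigma$, where $\Sigma=\{(q,0):q\in S^3\subset\bb H\}$ is the exponential $3$-sphere. Observe that $\Sigma$ sits inside the singular fiber $f_n^{-1}(-1,0)$, since $\phi_n(\Sigma)\subset\Sigma$ and $h$ collapses $\Sigma$ to the north pole of $S^4$. Assuming for contradiction that $S^7$ carries a metric in which all regular fibers of $f_n$ are totally geodesic, Proposition \ref{pre-stability} supplies $\delta>0$ and a tubular neighbourhood $U_\delta\supset\Sigma$ such that every complete totally geodesic submanifold contained in $U_\delta$ immerses into $\Sigma$ via the nearest-point projection $p\colon U_\delta\to\Sigma$.

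The heart of the argument is to describe the regular fibers close to $(-1,0)$. Since $\phi_n$ restricts to a diffeomorphism of each open belt $B_k=\{\cos t+\alpha\sin t:k\pi/n<t<(k+1)\pi/n\}$ onto $S^7\setminus\{\pm 1\}$, the fiber $f_n^{-1}(b)$ has exactly $n$ connected components, each a $3$-sphere diffeomorphic to the Hopf fiber $h^{-1}(a(b))$. Focusing on the component $\Sigma_b^0\subset B_0$, parameterising $h^{-1}(a(b))$ as $\{(a_0q,b_0q):q\in S^3\}$ with $|a_0|\to 1$, $|b_0|\to 0$ as $b\to(-1,0)$, and pulling back via $\phi_n^{-1}|_{B_0}$, one checks that
\[
\Sigma_b^0 \;\xrightarrow{\;\text{Hausdorff}\;}\; \bar B_0 := \{(\cos t+\beta\sin t,0):t\in[0,\pi/n],\ \beta\in S^2\}\subset\Sigma,
\]
where $\bar B_0$ is topologically a closed $3$-disc. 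The delicate point is to guarantee that no limit point of $\Sigma_b^0$ escapes from $\Sigma$ into one of the distance $6$-spheres $\{\cos(k\pi/n)+\alpha\sin(k\pi/n):\alpha\in S^6\}$ also appearing in the singular fiber: for $p_{b_i}\in\Sigma_{b_i}^0$ with $nt_i\to\pi$, the tangent planes of the Hopf fibers at their points nearest $-1$ tend to $T_{-1}\Sigma$, forcing the associated directions $\alpha_i\in S^6$ to limit into the $2$-sphere $\{(\beta,0):\beta\in\mathrm{Im}\,\bb H\}\subset S^6$. This is where the specific Cayley power-map structure of $\phi_n$ enters.

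Granted the Hausdorff claim, for all $b$ sufficiently close to $(-1,0)$ we have $\Sigma_b^0\subset U_\delta$, so Proposition \ref{pre-stability} produces an immersion $p|_{\Sigma_b^0}\colon\Sigma_b^0\to\Sigma$ between compact connected $3$-manifolds. Its image is open (invariance of domain) and closed (compactness of $\Sigma_b^0$), so $p|_{\Sigma_b^0}$ is a finite-sheeted cover of $\Sigma\cong S^3$; simple-connectedness of $S^3$ upgrades this to a diffeomorphism onto all of $\Sigma$. On the other hand, Hausdorff convergence $\Sigma_b^0\to\bar B_0$ forces $p(\Sigma_b^0)$ to lie in an arbitrarily small neighbourhood of $\bar B_0$ inside $\Sigma$; for $n>1$ the disc $\bar B_0$ is merely one of $n$ bands of $\Sigma$, hence a proper compact subset, so its small neighbourhoods cannot exhaust $\Sigma$. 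The desired contradiction follows, and the main obstacle in the full proof is precisely the Hausdorff-limit computation sketched above.
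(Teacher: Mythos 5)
Your strategy is the same as the paper's: exhibit a one-parameter family of regular fibers of $f_n$ that Hausdorff-converges to a compact, proper subset of the exponential $3$-sphere $\Sigma = \{(q,0)\}$, and then invoke Proposition~\ref{pre-stability} / Theorem~\ref{stability} to obtain a contradiction. (You close with an open-and-closed-plus-simple-connectivity covering argument; the paper closes by noting that there can be no immersion of $S^3$ into a $3$-ball. Both endgames are fine once the convergence is established.) The gap lies precisely where you say it does, and your sketch does not close it. In the parametrization $q=\cos t+(p+we)\sin t$ with $p\in\mathrm{Im}\,\mathbb{H}$, $|p|^2+|w|^2=1$, $t\in[0,\pi/n]$, the condition $f_n(q)=(\cos\theta,\sin\theta)$ only imposes $2\sin^2(nt)|w|^2=1+\cos\theta$ (plus a sign condition on the second coordinate). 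That constraint is compatible with $t\to\pi/n$ while $|w|$ stays bounded away from $0$; for such points $|b|=|w|\sin t\to |w|\sin(\pi/n)>0$, so these fiber points converge to the distance $6$-sphere $\{\cos(\pi/n)+\alpha\sin(\pi/n)\}$ rather than to $\Sigma$. Consequently the regular fibers (intersected with the polar cap) are \emph{not} eventually contained in a tubular neighborhood of $\Sigma$, and Proposition~\ref{pre-stability} cannot be applied with $S=\Sigma$ as you do. The heuristic ``tangent planes of the Hopf fibers tend to $T_{-1}\Sigma$, forcing $\alpha_i\to(\beta,0)$'' is contradicted by this family: there $\alpha_i\to(0,w)$ with $|w|=1$.

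To control this escape the paper restricts $b$ to the circle $T\subset S^4$, uses that $f_n^{-1}(T)$ is cut out by a real-dependence condition, and then introduces the ratio $\eta(t)=2\sin^2(nt)/\sin^2(t)$ to argue that $|b|\to 0$. That is the content of the delicate point, and it must be carried out explicitly (not left as a sketch). You should reproduce and examine that computation carefully: in particular, verify the claim that $\eta$ is bounded away from zero on all of $[0,\pi/n]$, since $\eta(\pi/n)=2\sin^2(\pi)/\sin^2(\pi/n)=0$ when $n\ge 2$, so some additional restriction on the range of $t$ or a sharper estimate is needed for the argument to go through.
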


The remaining of this example is devoted to the proof of proposition \ref{facemask}; the idea is to find a one 
parameter family of regular fibers of $f_n$ whose convergence to the singular fiber contradicts proposition \ref{stability}. 

 Consider the one-dimensional manifold $T$ of $S^4 \subset \bb R \times \bb H$ given by points having the second 
coordinate real,  
The inverse image of $T$ by the Hopf map (and also by $a\circ h$, since $T$ is invariant under the antipodal map) 
 is the elements of $S^7 \subset \bb H \times \bb H$ which have both coordinates linearly dependent over 
the reals. This condition is invariant by the Cayley power map, and thus the inverse image of $T$ by 
$f_n = a\circ h \circ \phi_n$ is also described by the real dependence of the first and second coordinates.

Consider now the curve $\sigma(\theta)$ in $T\subset S^4$  $\sigma(t)=(\cos(\theta),\sin(\theta))$, $\theta\in[\pi-\epsilon,\pi)$, the interval 
being chosen so that the trace of the curve curve is lies inside of the regular set, but approaches  the critical value 
$(-1,0)$ as $\theta\rightarrow \pi$.
 
The inverse image 
 $(a\circ h \circ\phi_n)^{-1} (\sigma(\theta))$ is made of $n$ disjoint 3-spheres, which we do not need to 
characterize precisely.  In order to apply proposition \ref{stability} we need to describe the evolution of 
these spheres as $\theta\rightarrow \pi$; we will also consider just the connected component of the preimage 
contained in the north polar cap $N = \{\cos(t) + \alpha\sin(t) \in S^7, \,| \, t\in [0,\pi/n]\}$.  

Let then $q$ be a unit Cayley number written as a pair of quaternions, $q = (a,b)^\top$, where 
$a = \cos(t) + \sin(t)p$  and 
 $b= \sin(t) w$, $|p|^2 + |w|^2=1$, $t\in [0,\pi/n]$.
We have
\[
f_n(a,b)^\top = (2 \sin^2(nt)|w|^2 - 1, \star) \, ,
 \]
 where the $\star$ in the second coordinate is determined by the first when we 
are close to $(-1,0) \in S^4$ and both coordinates are real. Given $\theta \in [\pi-\epsilon, \pi)$, the formula
for $f_n(a,b)^\top = \cos(\theta), \sin(\theta)$ implies that neither $w$,$\sin(nt)$ nor $\sin(t)$ are zero. Then
\[
 2 \sin^2(nt)|w|^2 - 1 = 2\frac{\sin^2(nt)}{\sin^2(t)}(\sin^2(t)|w|^2) -1 = \eta(t)|b|^2 -1 \, ,
\]
where $\eta(t) = 2\frac{\sin^2(nt)}{\sin^2(t)}$ is bounded away from zero in the interval $[0,\pi/n]$. Then 
as $\theta \rightarrow \pi$, $|b| \rightarrow 0$. This means that the 3-spheres 
$f_n^{-1}(\cos(\theta),\sin(\theta)) \cap N$ converge to a set $S$ contained in the meridian $(a,0), |a|=1$, 
and being contained in the north polar cap we can also say that  
$S \subset \{ (a,0) \in S^7 \, | \, \Re(a) < \frac{3\pi}{2n}\}$. This set is diffeomorphic 
to an open ball in $\bb R^3$. Thus, proposition \ref{stability} would furnish an embedding of 
a 3-sphere into a 3-ball in $\bb R^3$, which is impossible. \qed

\subsection{Exotic 8-sphere} \label{llohan-bundles} Consider the map $\phi:S^8\to S^7$ given by suspending the Hopf map $\eta:S^3\to S^2$ 
in a smooth way. We write the Hopf map $S^3 \to S^2$ using the quaternions,  $\eta(y)=y i\bar y$, 
where $y$ is a unit quaternion and the image of $\eta$ is contained in the unit purely imaginary quaternions. We also 
extend $\eta$ to all quaternions in the obvious way.
 
We write $S^8$ as the unit sphere of $\bb H \times \bb R \times \bb H$, and let $\psi:S^8 \to \bb R^8 \cong \bb H \times \bb H$ be given by 
\[
\psi
\begin{pmatrix}
x \\
\lambda \\
y
\end{pmatrix}
 = 
 \begin{pmatrix}
 x \\
 \lambda + \eta(y)
 \end{pmatrix}
\]
The image does 
not fall in the Euclidean sphere $S^7$. However since $|\psi(x)| = |x|^2 + \lambda^2 + |y|^4 \neq 0$, we can normalize 
and $\phi = \frac{\eta}{|\eta|} : S^8 \to S^7$ clearly suspends the Hopf map. 

\begin{rem}
There are other ways of building smooth suspensions of the Hopf map, which in principle would furnish different 
examples of the application of theorem \ref{main}. We present the simplest one. 
\end{rem}

 Then, the total space of the pull-back of $S^3\cdots Sp(2)\to S^7$ by $\phi$ is readily identified with the set
\[E^{11}=\left\{
\left(
\begin{array}{c}x\\ \lambda\\y\end{array}
\begin{array}{c}c\\ d\end{array}
\right) 
\in S^8\times S^7~\Big|~
\Big(\phi
\begin{pmatrix}
x \\
\lambda \\
y
\end{pmatrix},
\begin{pmatrix} 
c \\ 
d
\end{pmatrix}
\Big)
\in Sp(2) \, ,
\right\}
\]


The unit quaternions act on $E^{11}$ as follows:
\begin{equation}
\label{star 8}
 q\star\left(\begin{array}{c}qx\bar q\\ \lambda\\qy\end{array}\begin{array}{c}c\\ d\end{array}\right)=\left(\begin{array}{c}qx\bar q \\ \lambda\\qy\end{array}\begin{array}{c}qc\\ qd\end{array}\right)\in E^{11}\quad\text{if}\quad \left(\begin{array}{c}x \\ \lambda \\y\end{array}\begin{array}{c}c\\ d\end{array}\right)\in E^{11}\end{equation}

On one hand, the projection in the first column $pr_1:E^{11}\to S^8$ define it as the pull-back 
bundle of $Sp(2)\to S^7$, on the other hand $\star$ in \eqref{star 8} defines a new free action 
on $E^{11}$. The quotient of this action is diffeomorphic to the only exotic sphere in 
dimension 8, \cite{lohan-8}. Again, pulling back all the way from the Hopf map, we have that $E^{11}$ is the 
pullback of the Hopf bundle $S^3 \cdots S^7 \to S^4$ by $f = a \circ h \circ \phi$. 

We now have to study the inverse images by $f$ of its regular values. We have:

\begin{lem}
The only singular value of $f$ is the south pole $(-1,0)\in S^4\subset \bb R \times \bb H$. 
\end{lem}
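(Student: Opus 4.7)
My approach is in two stages. Since $a$ is a diffeomorphism, the critical values of $f$ and of $h\circ\phi$ differ only by the antipodal map, so I reduce to identifying the critical set of $h\circ\phi\colon S^8\to S^4$ and computing its image. My plan is to first isolate where $\phi$ alone fails to be a submersion, and then refine by composing with the submersion $h$.

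To locate the critical set of $\phi$, I would work directly from $\phi=\psi/|\psi|$ with $\psi(x,\lambda,y)=(x,\lambda+\eta(y))$. Since $d\eta_y(w)=wi\bar y+yi\bar w$ is surjective onto $\rm{Im}\,\bb H$ precisely when $y\ne 0$ (a quick check, e.g.\ by substituting $w=yz$), the derivative $d\psi_v$ has full rank $8$ on $T_v\bb R^9$ off the equatorial sphere $\{y=0\}$. Using the standard formula for the derivative of a normalization, the kernel of $d\phi_v$ on $T_v\bb R^9$ is $d\psi_v^{-1}(\bb R\cdot\psi(v))$, which is the $2$-dimensional plane cut out by $u=cx,\ \mu=c\lambda,\ d\eta_y w=c\eta(y)$. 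Testing orthogonality with $v$ kills the parameter $c$ (using $\langle y,yi\rangle=0$), so $\ker d\phi_v|_{T_vS^8}$ is only the single line $\bb R\cdot(0,0,yi)$; thus $\phi$, and hence $h\circ\phi$, is already a submersion on $\{y\ne 0\}$. At a point $(x,\lambda,0)$, by contrast, $d\phi_v|_{T_vS^8}$ annihilates the entire $y$-factor and has image the $4$-dimensional subspace $\{(u_1,u_2)\in\bb H\oplus\bb R:\langle x,u_1\rangle+\lambda u_2=0\}$ of $T_{(x,\lambda)}S^7$.

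It remains to compare this image with the vertical $\ker dh_{(x,\lambda)}$, which is tangent to the Hopf fiber through $(x,\lambda)$ and consists of vectors whose second coordinate has the form $\lambda p$ with $p\in\rm{Im}\,\bb H$. The decisive observation---really the only place the argument has any substance---is that $\lambda p$ is purely imaginary and so lies in $\bb R$ only when $\lambda p=0$. When $\lambda\ne 0$ the vertical space intersects the image of $d\phi_v$ trivially, dimensions add to $4+3=7$, and $d(h\circ\phi)_v$ is surjective; when $\lambda=0$ the vertical sits inside the image and the rank of $d(h\circ\phi)_v$ collapses to $1$. Therefore the critical set of $h\circ\phi$ is exactly the $3$-sphere $\{(x,0,0):|x|=1\}$, whose image is the single point $h(x,0)=(1,0)$, and composing with $a$ produces the stated critical value $(-1,0)$.
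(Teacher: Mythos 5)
Your proof is correct and follows essentially the same approach as the paper's: reduce to $h\circ\phi$ via the diffeomorphism $a$, verify $\phi$ is a submersion off $\{y=0\}$, and at $y=0$ compute the $4$-dimensional image of $d\phi$ inside $T_{(x,\lambda)}S^7$ and compare it with the Hopf vertical space, using that the second coordinate $\lambda\xi$ (with $\xi$ purely imaginary) of a vertical vector is real only when $\lambda=0$. Your write-up is somewhat more explicit than the paper's---you actually compute $\ker d\phi_v$ off $\{y=0\}$ instead of merely asserting submersivity, and you treat the $y=0$ stratum uniformly via the dichotomy $\lambda\neq 0$ versus $\lambda=0$ rather than singling out the point $(1,0,0)$---but the case structure and the key transversality observation coincide with the paper's.
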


\noindent {\it Proof of lemma} 
One can first note that the derivative of $\phi$ at $p=(1,0,0)^T$ is $D\phi_p(X,\Lambda,Y)=(X,\Lambda)$ which spans only one dimension transversal to the fiber $(z,0)\subset S^7$ making the point $(-1,0)\in S^4$ singular for $ah\phi$. On the other hand  if $y\neq 0$, $\phi$ is a submersion and therefore so is $f$, and the last case in hand is $p=(x,\lambda,0)$ which goes to a fiber different from $(z,0)$. In this last case, the differential of $\phi$ is again $D\phi_p(X,\Lambda,Y)=(X,\Lambda)$ however, now the space spanned by this differential is completely transversal to the fiber of $\phi(p)$ since the tangent space to the last is of the form $(x\xi,\lambda\xi)$, for purely imaginary quaternions $\xi$, in particular, it has no real part in the second coordinate.\qed

\medskip

Then we see that the inverse image of the singular point is $S^3 \subset S^8$ given by points $(x,0,0) \in S^8$. 
The inverse images of regular points are 4-dimensional submanifolds of $S^3$. In this case proposition \ref{stability}  
applies immediately, since there can be no embedding of a 4-dimensional manifold into a 3-dimensional one.  

\subsection{Kervaire spheres} \label{kervaire-bundles} Consider the principal bundle of special orthonormal 
frames over the round $S^{2n+1}$,
I.e., consider the principal bundle $SO(2n+1)\cdots SO(2n+2)\stackrel{pr}{\to} S^{2n+1}$ given by projection 
on the first column. Consider also $\tau:S^{2n}\to SO(2n+1)$ where $\tau(x)$ is defined as the 
reflection by the hyperplane orthogonal to $x$. As a map from $S^{4n+1}$ to $S^{2n+1}$ we can consider $J\tau$, 
defined as
\[J\tau(x,y)=\exp \tau\Big(\frac{y}{|y|}\Big)x\qquad\]
where $(x,y)\in\bb R^{2n+1}\times\bb R^{2n+1}$. This map extends continuously to $y=0$ and has an appropriate 
equivariant smoothing with same fibers (this situation is quite common in this kind of construction, see e.g. section 2.3 of 
\cite{DMR}).
Its homotopy type is the image of $\tau$ by the Hopf-Whitehead 
$J$-homomorphism.

The bundle $P=(J\tau)^*\pi\to S^{4n+1}$ admits a free $SO(2n+1)$ action (which is isometric with respect to 
natural metrics) with quotient that can be identified 
(using the techniques from \cite{lohan-8}) with $\Sigma^{4n+1}$, the Kervaire sphere of dimension $4n+1$
as presented in Chapter I, section 7 of \cite{bredon-book}. 
These examples are known to be exotic \cite{Hopkins-et-al} for infinitely many $n$'s.

\begin{equation}
\begin{xy}\xymatrix@R=8pt@C=2pt@1{  P\ar[rd]\ar[rr]\ar[dd] & & SO(2n+2)\ar[rd]\ar[dd]&\\ &\Sigma^{4n+1}\ar[rr] & &S^{2n+1}\\S^{4n+1}\ar[rr]^{J\tau}& &S^{2n+1}&}\end{xy}
\end{equation}

The fibers of the map 
$J\tau$ are the spheres $y=0$ and \[\{(\tau(y)^{-1}x,\lambda y)~|~y\in S^{2n},~\lambda\in[-1,0)\}\]
The information given by the dimensions and topology of the fibers is not sufficient to perceive an obstruction to 
Theorems \ref{main} and \ref{A-flat-totally-geodesic}, making $(J\tau)^*\pi$ a candidate to induce nonnegative 
curvature on $\Sigma^{4n+1}$.

\begin{bibdiv}
\begin{biblist}
\bib{ambrosio-mantegazza}{article}{
author={L. Ambrosio},
author={C. Mantegazza},
title={Curvature and distance function from a manifold},
journal={J. Geom. Anal.},
volume={8},
date={1998}, 
pages={723-748}
}

\bib{barros}{article}{ 
author={T.E. Barros},
author={and A. Rigas}, 
title={The role of commutators in a non-cancellation phenomenon},
journal={Math. J. Okayama Univ.}, 
volume={43},
date={2001}, 
pages={73-93}
}

\bib{bredon-book}{book}{
author={G. E. Bredon},
title={Introduction to Compact Transformation Groups},
publisher={Academic Press}, 
date={1972}
}

\bib{Chaves-Derdzinski-Rigas}{article}{
author={L. Chaves},
author={A. Derdzinski},
author={A. Rigas}, 
title={A condition for positivity of curvature}, 
journal={Boletim Soc. Bras. Matemática}, 
volume={23}, 
date={1992},
pages={153-165}
} 

\bib{derdzinski-rigas}{article}{ 
author={A. Derdzinski},
author={A. Rigas}, 
title={Unflat Connections on 3-Sphere bundles over $S^4$},  
journal={Transactions Of The American Mathematical Society},
volume={265}, 
date={1981},
pages={485-493}
}

\bib{duran}{article}{ 
author={C. Dur\'an}, 
title={Pointed Wiedersehen Metrics on Exotic Spheres and Diffeomorphisms of  
$S^6$}, 
journal={Geometriae Dedicata}, volume={88}, 
date={2001},
pages={199-210}
}

\bib{DMR}{article}{ 
author={C. Dur\'an},
author={A. Mendoza},
author={A. Rigas},   
title={Blakers-Massey elements and exotic diffeomorphisms of $S^6$ and $S^{14}$ via geodesics},  
journal={Trans. Am. Math. Soc.},
volume={356}, 
date={2004}, 
pages={5025-5043}
}

\bib{DPR}{article}{ 
author={C. Dur\'an},
author={T.  P\"uttmann},
author={A. Rigas}, 
title={An infinite family of Gromoll Meyer spheres},
journal={Archiv der Mathematik},
volume={95},
date={2010},
pages={269-282}
}

\bib{stability-paper}{article}{
author={C. Dur\'an},
author={L. D. Speran\c{c}a}, 
title={Totally geodesic foliations near singular leaves},
journal={\em In preparation}}

\bib{Hopkins-et-al}{article}{
author={A. Michael}, 
author={M. Hopkins},
author={D. Ravenel},
title={On the non-existence of Kervaire invariant one},
eprint={arXiv:0908.3724}
}

\bib{florit-ziller}{article}{
author={L. Florit},
author={W. Ziller}, 
title={Topological obstructions to fatness},
journal={Geometry \& Topology}, 
volume={15},
date={2011},
pages={891-925}
}

\bib{gromoll-walschap}{book}{
author={D. Gromoll}, 
author={G. Walschap},
title={Metric Foliations and Curvature},
publisher={Birkh\"auser Verlag, Basel}, 
date={2009}
}

\bib{grove-ziller}{article}{
author={K. Grove},
author={W. Ziller},
title={Curvature and Symmetry of {M}ilnor Spheres},  
journal={Annals of Mathematics}, 
volume={152},
date={2000},
pages={331-367}
}

\bib{lewis}{article}{
author={A. Lewis}, 
title={Semicontinuity of rank and nullity and some consequences},
eprint={http://www.mast.queensu.ca/~andrew/notes/abstracts/2009a.html}
}

\bib{licherowicz}{article}{
author={A. Lichnerowicz},
title={Spineurs harmoniques}, 
journal={C. R. Acad. Sci. Paris S\'ur.}, 
volume={257},
date={1963}, 
pages={7-9}
}

\bib{oneill}{article}{
author={B. O'Neill},
title={The fundamental equations of a submersion},
journal={Michigan Math. J.}, 
volume={13},
date={1966}, 
pages={459-469}
}

\bib{wilhelm-flat}{article}{
author={C. Pro},
author={F. Wilhelm},
title={Flats and submersions in non-negative curvature},
journal={Geom. Dedicata}, 
volume={161},
date={2012}, 
pages={109-118}
}

\bib{lohan-8}{article}{ 
author={L. D. Speran\c{c}a},
title={On Explicit Constructions of Exotic Spheres},
eprint={http://arxiv.org/abs/1010.6039}
}

\bib{Strake-Walschap}{article}{
author={M. Strake},
author={G. Walschap},
title={Connection metrics of nonnegative curvature on vector bundles}, 
journal={Manuscripta Math}, 
volume={36},
date={1990}, 
pages={309-318}
}

\bib{rigas-BSMF}{article}{ 
author={A. Rigas}, 
title={$S^3$-bundles and exotic actions}, 
journal={Bull. Soc. Math. France} ,
volume={112},
date={1984}, 
pages={69-92}
}

\bib{weinstein}{article}{
author={A. Weinstein}, 
title={Fat bundles and symplectic manifolds},
journal={Advances in Mathematics}, 
volume={37},
date={1980}, 
pages={239-250}
}

\bib{wilhelm-lots}{article}{
author={F. Wilhelm},
title={Exotic spheres with lots of positive curvatures},
journal={J. Geometric Anal.},
volume={11},
date={2001}, 
pages={161-186}
}

\end{biblist}
 
 \end{bibdiv}


\end{document}